\documentclass[11pt,reqno]{amsart}

\usepackage[T1]{fontenc}
\usepackage[utf8]{inputenc}
\usepackage[margin=1in]{geometry}
\usepackage{amsmath,amssymb}
\usepackage{xcolor}
\usepackage{microtype}
\usepackage{indentfirst}
\usepackage[
  colorlinks=true,
  linkcolor={blue!75!black},
  citecolor={blue!80!black},
  urlcolor={blue!65!black}
]{hyperref}

\numberwithin{equation}{section}

\theoremstyle{plain}
\newtheorem{theorem}{Theorem}[section]
\newtheorem{lemma}[theorem]{Lemma}

\theoremstyle{definition}
\newtheorem{definition}[theorem]{Definition}

\theoremstyle{remark}
\newtheorem{remark}[theorem]{Remark}

\newcommand{\N}{\mathbb{N}}
\newcommand{\Z}{\mathbb{Z}}
\newcommand{\F}{\mathbb{F}}
\newcommand{\SU}{\operatorname{SU}}
\newcommand{\PSU}{\operatorname{PSU}}
\newcommand{\CT}{\operatorname{CT}}
\newcommand{\UT}{\operatorname{UT}}
\newcommand{\rank}{\operatorname{rank}}
\newcommand{\ord}{\operatorname{ord}}
\newcommand{\lcm}{\operatorname{lcm}}
\DeclareMathOperator{\supp}{supp}

\hypersetup{
  pdftitle={On Some Problems from the Kourovka Notebook},
  pdfauthor={Wouter van Doorn, Elias Judin, Pietro Monticone, and Daniel Morrison},
  pdfsubject={Solutions to eight problems from the Kourovka Notebook, formally verified in Lean},
  pdfkeywords={Kourovka Notebook, group theory, formal verification, Lean, Mathlib,
    automated theorem proving},
  pdflang={en-GB}
}

\begin{document}

\title{On Some Problems from the Kourovka Notebook}

\author{Wouter van Doorn}
\address{Groningen, the Netherlands}
\email{wonterman1@hotmail.com}

\author{Elias Judin}
\address{Department of Mathematics and Applied Mathematics, University of Cape
Town, Rondebosch 7701, South Africa}
\email{ejudin@gmail.com}

\author{Pietro Monticone}
\address{Harmonic, London, United Kingdom}
\email{pietro.monticone@harmonic.fun}

\author{Daniel Morrison}
\address{Orange, California, United States}
\email{morrison.danielj@gmail.com}

\subjclass[2020]{Primary 68V20; Secondary 05C25, 05E16, 06F15, 17B38, 20B30,
20D06, 20D15, 20D60, 20E22, 20E25, 20E36, 20F19, 20F60}
\keywords{Kourovka Notebook, locally soluble group, permuted products, finite
simple group, Rota--Baxter operator, class transposition, symmetric group, power
graph, cograph, right-orderable group, Dlab group, order automorphism, finite
$p$-group, formal verification, Lean, Mathlib, automated theorem proving}

\begin{abstract}
The \emph{Kourovka Notebook} is a long-running collection of open problems in
group theory. In this paper we present solutions to eight of its problems. We
construct a group with exactly two maximal locally soluble normal
subgroups and show that, for every $1 \le k\le n!$, there is a group containing
$n$ distinct elements whose $n!$ ordered products take exactly $k$ distinct
values. We also give examples showing that group order together with the
statistic $\sum_g\varphi(\lvert g\rvert)$ does not determine simplicity, and we
construct a surjective non-injective Rota--Baxter operator on a non-abelian
group. Further, we determine the group generated by the class transpositions of
moduli at most $k$, prove that every power graph of a finite group that is a
cograph is chordal, show that the right-relatively convex subgroups of a
right-orderable group need not form a sublattice of its subgroup lattice, and
disprove a proposed rank inequality for certain $p$-group extensions. All of
these solutions were autonomously discovered and formally verified in Lean by
\textnormal{\href{https://aristotle.harmonic.fun/}{Aristotle}}, a formal
reasoning agent developed by
\textnormal{\href{https://www.harmonic.fun/}{Harmonic}}.
\end{abstract}

\maketitle

\section{Introduction}

The \emph{Kourovka Notebook} is a collection of open problems in group theory.
It was first proposed in 1965 at a group theory meeting in the small town of
Kourovka. The first issue of the Notebook was published later that same
year~\cite{Kourovka}. Since then, new issues containing both new problems and
solutions to earlier ones have appeared every two to four years; the
twenty-first issue appeared in early 2026. Today, the Notebook serves as a
standard reference for open problems in group theory and contains problems
proposed by mathematicians from all over the world. It has been highly
successful in promoting communication and collaboration among mathematicians and
has spurred solutions to hundreds of problems, including most of those from the
earliest issues.

We present solutions to eight problems from the Kourovka Notebook, including one
that first appeared in the third issue in 1969. The problems
cover a wide variety of topics in group theory, and their solutions take the
form of proofs, counterexamples, and constructions. What unites them is the
method by which they were discovered: each solution was developed and formalised
by Aristotle, a formal reasoning agent developed by Harmonic~\cite{Aristotle}.

Formalisation of mathematics is the process by which mathematical definitions,
statements, and proofs are encoded in the language of a proof assistant (in our
case Lean~4~\cite{Lean}), after which a computer can logically verify the work
step by step.
Formalisation is valuable for many reasons, not least because it allows the
rigorous verification of difficult or technical proofs in which human error is
particularly likely. However, the precise nature of the work makes formalisation
time-consuming and challenging, even when solutions are already known, partly
because proofs written for human readers often omit certain details or rely on
unstated assumptions that the reader is expected to infer. Aristotle aims to
automate formalisation and, as the results of this paper demonstrate, to apply
formal reasoning to the development of new solutions.

Therefore, the objective of this work is twofold. First, we present new
solutions to problems from the Kourovka Notebook in conventional mathematical
prose. Second, we describe the role of formalisation and
autonomous agents in the development of these solutions. The first objective is
addressed in the main sections of this paper, where we present natural language
solutions to each problem. We now summarise the eight results.

In Section~\ref{sec:346} we construct a group with exactly two maximal locally
soluble normal subgroups, resolving Problem~3.46. In Section~\ref{sec:1850} we
solve Problem~18.50 and show that, for all positive integers $k$ and $n$ with $1
\le k \le n!$, there exists a group $G$ containing $n$ distinct elements whose
$n!$ products take on exactly $k$ distinct values. In Section~\ref{sec:1925} we
answer Problem~19.25 in the negative by exhibiting groups $G$ and $H$ of equal
order and equal totient sum, with $G$ simple and $H$ not. In
Section~\ref{sec:20125} we solve Problem~20.125 by constructing a surjective but
non-injective Rota--Baxter operator on a non-abelian group. In
Section~\ref{sec:218} we answer Problem~21.8 affirmatively and prove that the
horizontal class transpositions with modulus at most $k$ generate a group which
is isomorphic to the symmetric group on $\lcm(2, 3, \dots, k)$ elements. In
Section~\ref{sec:2124} we prove that if the power graph of a finite group has an
induced cycle of length four, then it also has an induced path on four vertices.
Equivalently, we answer Problem~21.24 affirmatively; if the power graph of a
finite group is a cograph, then it is chordal. In Section~\ref{sec:21147} we
solve Problem~21.147 by showing that the right-relatively convex subgroups of a
right-orderable group need not form a sublattice of its subgroup lattice.
Finally, for Problem~21.150 we let $G$ be an extension of a normal elementary
abelian subgroup $A$ by an elementary abelian group $B\cong G/A$, and we suppose
that $A$ contains an element $a$ for which $C_B(a)$ is trivial. In
Section~\ref{sec:21150} we give an example of such a $p$-group $G$ for which the
inequality $\rank\bigl(Z(H)\cap H'\bigr)\le\rank(B)$ fails, where $H=\langle
a,B\rangle$.

These sections are independent and self-contained, and can be read without any
knowledge of formalisation. They also contain the necessary background, as well
as references to the relevant literature. In Appendix~\ref{sec:appendix} we
elaborate on the process of formal discovery and provide further details on our
methodology. The formal solutions we refer to are all available in our GitHub
repository.\footnote{\url{https://github.com/pitmonticone/Kourovka}}

We believe that the responsible use of automated formal reasoning tools requires
clear disclosure of how they were used and what role humans played in the
resulting work. In this case, the solutions were discovered autonomously by
Aristotle; by this we mean that it developed the proof strategies without human
guidance about which mathematical steps to take. The authors monitored Aristotle's
work, intervened when clarifications were needed and finally worked with
Aristotle to develop a library of relevant concepts. Thus, Aristotle was
responsible for developing the new mathematical arguments, while the authors
curated the project, provided the infrastructure, and
ultimately wrote this paper; both parts were necessary for the final result.

Once the initial formal solution was developed, we \emph{informalised} the
proof. That is, we translated the proof produced by Aristotle in Lean~4 into
conventional mathematical prose. We then shared both the formal and informal
proofs with the Kourovka Notebook editors, the problem proposers, and reviewers.
These initial submissions remain available in the official preprint repository
maintained on the
\href{https://kourovkanotebookorg.wordpress.com/repository/}{\emph{Kourovka Notebook}
website} and are cited in the corresponding sections. Once a solution was accepted,
we polished the informal exposition and canonised the formal codebase so that at
least part of it could be upstreamed to Mathlib~\cite{Mathlib}. We extracted and
generalised reusable results, refactored the code, and improved its organisation
and documentation. We did this work partly by hand and partly with Aristotle.
The informal proofs presented here and the formal proofs in the companion
repository are intended to complement one another: the former expose the
mathematical ideas, while the latter provide machine-verifiable accounts of the
arguments. Together, these results show that autonomous formal reasoning can
move beyond verification to mathematical discovery.

\section{Problem 3.46: Maximal locally soluble normal subgroups}
\label{sec:346}

We say that a group is \emph{locally soluble} if every finitely generated
subgroup is soluble. These groups have been extensively studied since the work
of Chernikov~\cite{Chernikov1940, Chernikov1943}. Their theory was developed
further by, among others, Roseblade~\cite{Roseblade1962} and
Wilson~\cite{Wilson1977}.
Relevant surveys with additional references are those by
Plotkin~\cite{Plotkin5861} and Robinson~\cite{Robinson1972}.

We define a normal subgroup of a group to be \emph{maximal locally soluble} if
it is locally soluble and not properly contained in any other locally soluble
normal subgroup. With this definition, we first observe that every group
has at least one such subgroup. Indeed, the trivial subgroup is certainly
locally soluble, and one can check that it must be contained in a maximal
element by Zorn's lemma.

There are a few cases in which it is not hard to prove that a maximal locally
soluble normal subgroup must also be unique. For example, any locally soluble
group (in particular every soluble group) has exactly one such subgroup, namely
itself. Somewhat less trivially, every finite group also has a unique such
subgroup. For finite groups, local solubility coincides with solubility, and the
product of any two soluble normal subgroups is soluble. Hence the join of all
soluble normal subgroups, known as the soluble radical, is the unique maximal
locally soluble normal subgroup. More generally, a group has exactly one
maximal locally soluble normal subgroup precisely when any product of two
locally soluble normal subgroups is again locally soluble.

Plotkin's survey on soluble and nilpotent groups~\cite{Plotkin5861} asked the
natural question whether it is true in general that the product of two locally
soluble normal subgroups is again locally soluble, which, if true, would thereby
imply the uniqueness of a maximal locally soluble normal subgroup. As it turns
out, the answer is no; Baumslag, Kov\'acs and Neumann~\cite{BKN1965} managed to
construct a finitely generated group $G$ that can be written as a product of two
locally soluble normal subgroups, despite the fact that $G$ itself is not
locally soluble. As their construction provides a group which has infinitely
many maximal elements, this prompted Plotkin to wonder whether there was a
dichotomy at play; is it the case that every group either has a unique maximal
locally soluble normal subgroup, or has infinitely many of them? The Kourovka
Notebook records the resulting question as Problem~3.46~\cite{Kourovka}: does
there exist a group having more than one but finitely many maximal locally
soluble normal subgroups? As in the Notebook submission~\cite{KNote346}, here we
answer these questions by constructing a group with exactly two maximal locally
soluble normal subgroups.

\begin{theorem}
There exists a group $G$ with exactly two maximal locally soluble normal
subgroups $M_1$ and $M_2$.
\end{theorem}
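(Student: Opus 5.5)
The plan is to build $G$ so that it contains a locally soluble normal subgroup $D$ with $G/D \cong C_p \times C_q$ for distinct primes $p \neq q$. Set $M_1, M_2$ to be the preimages of $C_p \times 1$ and $1 \times C_q$. Since $p\neq q$, the only subgroups of $C_p\times C_q$ are $1$, $C_p\times1$, $1\times C_q$ and the whole group. So the normal subgroups of $G$ containing $D$ are exactly $D$, $M_1$, $M_2$ and $G$.

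Given this, the argument has four steps.
\begin{enumerate}
\item Show that $M_1$ and $M_2$ are locally soluble. A finitely generated subgroup of $M_i$ meets $D$ in a finite-index subgroup, which is soluble if it is finitely generated. Cleaner still, $M_i$ is an extension of the locally soluble group $D$ by a cyclic group. Local solubility is not extension-closed in general, so this has to be arranged by the construction. For instance, make $D$ hyperabelian (an ascending union of abelian normal steps), so that $M_i$ is again hyperabelian and hence locally soluble.
\item Show that $G$ itself is not locally soluble.
\item Show that every locally soluble normal subgroup $L$ lies in $M_1$ or in $M_2$.
\item Conclude that $M_1$ and $M_2$ are exactly the maximal locally soluble normal subgroups.
\end{enumerate}
For the last step, each $M_i$ is maximal: the only normal subgroup properly containing $M_i$ is $G$, which is not locally soluble by step 2, and $M_1\not\subseteq M_2$. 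Step 3 then shows there are no others.

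For the construction I would adapt the Baumslag--Kov\'acs--Neumann group~\cite{BKN1965}. There one has a base group $W$ that is locally soluble and normal, acted on by a finitely generated group in such a way that two locally soluble normal subgroups generate a non-locally-soluble group. I would take $D=W$, which should be hyperabelian, and choose the top so that the two cyclic generators $x$ of order $p$ and $y$ of order $q$ commute modulo $W$. Each of $\langle W,x\rangle$ and $\langle W,y\rangle$ is then hyperabelian, as required in step 1. Meanwhile $\langle x,y\rangle$ together with suitable elements of $W$ should contain a finitely generated insoluble section, such as a free or nonabelian simple section, which gives step 2. The design principle is that the action of $x$ alone and of $y$ alone on $W$ is ``triangular'' (it preserves an ascending abelian series), while the combined action is not.

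The main obstacle is step 3. Let $L$ be locally soluble and normal. Its image $LD/D$ is one of the four subgroups above, and $L\subseteq M_1\cup M_2$ fails only if $LD=G$. So I must exclude $G=LD$ with $L, D$ both locally soluble normal, which is exactly the phenomenon that BKN exhibit. My first attempt would be a commutator argument. Choose $\ell\in L$ mapping onto a generator of $G/D$; then $\ell^{q}$ and $\ell^{p}$ act on $D$ like $x$ and $y$ up to elements of $D$. Normality of $L$ and $[L,D]\le L\cap D$ should then force $L$ to contain the finitely generated insoluble subgroup from step 2, twisted by elements of $D$. The construction must be rigid enough, for example by making $D$ act trivially on the relevant insoluble section, that such twisting by $D$ does not destroy insolubility. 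This gives a contradiction, so $L\le M_1$ or $L\le M_2$.
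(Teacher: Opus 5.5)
\textbf{The plan fails at its foundation.} If $D\trianglelefteq G$ is locally soluble and $G/D\cong C_p\times C_q$, then $G$ is itself locally soluble. To see this, take a finitely generated $H\le G$. The subgroup $H\cap D$ has finite index in $H$, so it is finitely generated by Schreier's lemma, hence soluble. Also $H/(H\cap D)$ embeds in the abelian group $G/D$, so $H$ is soluble.

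This is exactly your first argument in step~1. That argument is correct, so the hyperabelian detour is unnecessary there. But the same argument applies verbatim to $G$. So step~2 can never hold, and $G$ would be its own unique maximal locally soluble normal subgroup.

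For the same reason, the Baumslag--Kov\'acs--Neumann phenomenon you want to exclude in step~3 cannot occur when $G/D$ is locally finite and soluble. That phenomenon is $G=LD$ with $L,D$ locally soluble and normal. A locally soluble torsion quotient is locally finite, so the phenomenon forces $G/D$ to have elements of infinite order. No ``rigid'' choice of construction on top of a finite quotient can rescue the approach.

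The paper instead uses a torsion-free quotient. It takes $G=K\rtimes\Z^2$ with $K=\UT_1\times\UT_2$, and $M_1,M_2$ are the kernels of the two coordinate maps $G\to\Z$. Both conveniences of your set-up are then lost.
\begin{itemize}
\item Step~1 is no longer automatic: for finitely generated $H\le M_1$, the subgroup $H\cap K$ has infinite index and need not be finitely generated. The paper shows instead that all elements of $H$ involve only finitely many second coordinates. This bounds the derived length of $H\cap K$ via the gap subgroups $U_d$.
\item $\Z^2$ has infinitely many subgroups, so there is no four-element lattice to appeal to. The paper proves directly that any normal $N$ with $N\not\le M_1$ and $N\not\le M_2$ contains some $g=(k,a,b)$ with $a>0$ and $b\neq0$.
\item It then takes elementary matrices $e_{m,n}$ at positions $\bigl((ma,mb),(na,nb)\bigr)$ with $m\ge m_0$, i.e.\ beyond the support of $k$. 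This is what neutralises the twisting by $K$ that you worried about. It obtains $u_1\in N$ from $[u_0,[u_0,g]]$ and shows that $\langle g,u_1\rangle$ has infinite derived length.
\end{itemize}
Your guiding principle is the right one: translations along either axis act triangularly, while diagonal ones do not. But it only works when the translations have infinite order.
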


\begin{proof}
We consider infinite integer matrices whose rows and columns are indexed by the
lattice $\Z^2$, and we restrict to unipotent matrices with finitely many nonzero
off-diagonal entries. For such a matrix $U$, let $f\colon\Z^2\times\Z^2\to\Z$ be
the (finitely supported) function representing the off-diagonal part of $U$.
That is, we can write $U=I+f$, where, for distinct $p,q\in\Z^2$, $f(p,q)$ is the
$(p,q)$-entry of $U$. Since $U$ and $f$ determine one another, we freely switch
between these two viewpoints. For example, matrix multiplication corresponds to
\[
(f\ast g)(p,q):=\sum_{k\in\Z^2} f(p,k)\,g(k,q),
\]
which is a finite sum. With $f^{\ast1} := f$, we write $f^{\ast m} := f\ast
f^{\ast (m-1)}$ for the $m$-fold product. For the matrices we work with, we have
$(I+f)(I+g)=I+(f+g+f\ast g)$, so the induced group law on the corresponding
functions can be written as
\[
f\cdot g:=f+g+f\ast g.
\]
\begin{definition}
Writing a lattice point as
$p=(p^{(1)},p^{(2)})$, call both $U$ and its corresponding $f$
\emph{product-valid} if every $(p,q)\in\supp f$ satisfies $p^{(1)}<q^{(1)}$ and
$p^{(2)}<q^{(2)}$, and \emph{anti-valid} if every such $(p,q)$ satisfies
$p^{(1)}<q^{(1)}$ and $q^{(2)}<p^{(2)}$. Let $\UT_1$ and $\UT_2$ be the sets of
product-valid and anti-valid matrices respectively, and set
$K:=\UT_1\times\UT_2$.
\end{definition}

\begin{lemma}\label{lem:ut-group}
Both $\UT_1$ and $\UT_2$ are groups; hence so
is $K$. Moreover, if $I+f\in \UT_1\cup\UT_2$, then $f^{\ast m}=0$ for all $m >
\lvert \supp f \rvert$, and for the inverse of $I+f$ we have the equality
\begin{equation}\label{eq:utinverse}
(I+f)^{-1} = I-f+f^{\ast2}-f^{\ast3}+\cdots\pm f^{\ast \lvert \supp f \rvert}.
\end{equation}
\end{lemma}

\begin{proof}
Assume that $f$ and $g$ are product-valid and let $(p,q)$ be an element of
$\supp(f\cdot g)$. Then $(p,q)$ lies in $\supp f$, in $\supp g$, or in
$\supp(f\ast g)$; in each case a nonzero contribution forces $p^{(1)}<q^{(1)}$
and $p^{(2)}<q^{(2)}$ by transitivity along the intermediate index, so that
$\UT_1$ is closed under taking products. Further, if $f^{\ast m}(p,q) \neq 0$,
then there are points $p = p_0, p_1, \ldots, p_m = q$ with every pair $(p_{i-1},
p_i) \in \supp f$. Since $f$ is product-valid by assumption, this gives a
strictly increasing chain $p_0^{(1)}<p_1^{(1)}<\cdots<p_m^{(1)}$ of first
coordinates. In particular, $p_i \neq p_j$ for all $1 \le i < j \le m$, which
implies $m \le \lvert \supp f \rvert$. Hence, multiplying the right-hand side of
equation~\eqref{eq:utinverse} by $I+f$ telescopes to $I$, giving the required
formula for the inverse of $I+f$. Since finite sums of product-valid functions
are again product-valid, we deduce that $\UT_1$ is closed under taking inverses.
As associativity follows from the general associativity of matrix
multiplication, we conclude that $\UT_1$ is indeed a group. The same arguments
apply to the anti-valid case $\UT_2$, which finishes the proof.
\end{proof}

For $(a,b) \in \Z^2$, let $\varphi_{a,b}$ be the automorphism of $K$ induced by
the lattice translation $(x,y) \mapsto (x+a,\,y+b)$ applied to both the row and
column indices of every off-diagonal entry. It preserves both validity
conditions, is multiplicative, and satisfies $\varphi_{0,0}=\mathrm{id}$ and
$\varphi_{a_1,b_1}\circ\varphi_{a_2,b_2}=\varphi_{a_1+a_2,\,b_1+b_2}$. We then
define the group $G$ and its subgroups $M_1$ and $M_2$ as follows.

\begin{definition}
Let $G$ be the semidirect product $G := K\rtimes\Z^2$, with
\[
(k_1,a_1,b_1)(k_2,a_2,b_2) := \bigl(k_1\,\varphi_{a_1,b_1}(k_2),\;a_1+a_2,\;b_1+b_2\bigr).
\]
For $g=(k,a,b) \in G$, call $a$ and $b$ its \emph{translation coordinates}, and
set
\[
M_1 := \{g\in G: b=0\} \qquad \text{and} \qquad M_2 := \{g\in G: a=0\}.
\]
\end{definition}

As for $M_1$ and $M_2$, it is clear that neither is contained in the other.
Furthermore, as they are the kernels of the homomorphisms that send $g = (k, a,
b)$ to $b$ and $a$ respectively, they are indeed normal. It therefore suffices
to show that they are locally soluble, and that every other locally soluble
normal subgroup of $G$ is contained in one of them. We begin with local
solubility. Let $H \le M_1$ be a finitely generated subgroup; the case $H \le
M_2$ is analogous. We must prove that $H$ is soluble.

The homomorphism that sends $(k, a, 0) \in M_1$ to $a \in \Z$ has kernel
(isomorphic to) $K$. Restricting the homomorphism to $H$, we see that $H/(H\cap
K)$ embeds in $\Z$ by the first isomorphism theorem, and is therefore abelian.
Since an extension of a soluble group by an abelian group is again soluble, in
order to show that $H$ is soluble, it suffices to prove that $J := H\cap K$ is
soluble.

For each of the finitely many generators $(k, a, 0)$ of $H$, consider the second
coordinates of all row and column indices appearing in the supports of the two
matrix components of $k$. Since each $k$ has finite support, these coordinates
form a finite set; let $S\subseteq \Z$ be the union of these finite sets over
all generators and assume without loss of generality that $S$ is nonempty. We
claim that every element of $H$ has its $K$-component supported only on row and
column indices whose second coordinates lie in $S$. Indeed, in the semidirect
product multiplication inside $M_1$, the only translations that occur preserve
second coordinates, while the convolution product and inverse formula introduce
no new second-coordinate values. Hence all second coordinates occurring in the
$K$-components of elements of $H$ already occur among the generators.

For a positive integer $d$, let $U_d$ be the subgroup of $\UT_1$ consisting of
those elements $I+f$ for which every $(p,q) \in \supp f$ satisfies
$q^{(2)}-p^{(2)}\ge d$. To see why these elements form a subgroup, we more
generally note that, if $I+f\in U_d$ and $I+g\in U_e$, then every nonzero
contribution to $f \ast g$ comes from some $p,r,q$ with $f(p,r)g(r,q) \neq 0$,
giving $q^{(2)}-p^{(2)} \ge d+e$. Hence $U_d$ is closed under taking products,
while the inverse formula of Lemma~\ref{lem:ut-group} shows that $U_d$ is closed
under taking inverses as well. Moreover, in the commutator expansion of two
elements of $U_d$ and $U_e$, all linear terms cancel by the formula for the
inverse, and every remaining term contains at least one factor from each side.
In particular, $[U_d,U_e] \le U_{d+e}$.

Now let $L \le \UT_1$ be a subgroup whose entries use only second coordinates
from $S$ and set $D := \max S-\min S$. Every nonzero entry of an element of $L$
has a vertical gap between $1$ and $D$. By the commutator estimate, induction
gives $L^{(i)} \le U_{2^i}$, where $L^{(i)}$ is the $i$-th derived subgroup.
Choose $i$ with $2^i>D$. Then no nonzero entry remains in $L^{(i)}$, so
$L^{(i)}=1$ and $L$ is soluble. The same argument applies to subgroups of
$\UT_2$, using the gap $p^{(2)}-q^{(2)}$ instead.

To finish, let $\pi_1, \pi_2$ be the projections from $K = \UT_1\times\UT_2$
to its two factors. By applying the previous paragraph to $L := \pi_1(J)$ and
$L:= \pi_2(J)$ we find that both projections are soluble. Since $J\le
\pi_1(J)\times\pi_2(J)$, we finally deduce that $J$ is soluble as well, proving
the local solubility of $M_1$.

What remains to be shown is that for every locally soluble normal subgroup $N
\le G$ we have either $N \le M_1$ or $N \le M_2$. Contrapositively, let $N$ be a
normal subgroup contained in neither $M_1$ nor $M_2$. We will then show that
there exists a finitely generated subgroup $H \le N$ such that $H$ is not
soluble.

Since $N \not \le M_1$ and $N \not \le M_2$, $N$ must contain an element $g_1$
with nonzero first translation coordinate and an element $g_2$ with nonzero
second translation coordinate. One can then check that one of
$g_1,g_1g_2,g_1^2g_2$ must have
both translation coordinates nonzero and, after replacing it by its inverse if
necessary, we obtain an element $g = (k,a,b) \in N$ with $a > 0$ and $b \neq 0$.

Let $m_0 \ge 0$ be the largest absolute value of any coordinate that appears in
a row or column index of a nonzero entry of $k$ or $k^{-1}$. Moreover, for
integers $m$ and $n$ with $m_0 \le m < n$, define $e_{m,n}$ to be the unipotent
matrix with the single nonzero off-diagonal entry $1$ at the position
$\bigl((ma,mb),(na,nb)\bigr)$. If $b>0$, then $ma<na$ and $mb<nb$, so
$e_{m,n}\in\UT_1$. If $b<0$, then $nb<mb$, so $e_{m,n}\in\UT_2$. Either way, we
have the following two identities, where the commutator $[x,y]$ is defined as
$xyx^{-1}y^{-1}$.

\begin{lemma}\label{lem:single}
For all integers $\ell, m, n$ with $m_0 \le m <
\ell < n$ we have
\[
\bigl[e_{m,\ell},e_{\ell,n}\bigr] = e_{m,n}
\qquad \text{and} \qquad
ge_{m,n}g^{-1} = e_{m+1,n+1},
\]
where for the second identity we identify $e_{m,n}$ with the corresponding
element of $G$ which has all other components equal to the identity.
\end{lemma}

\begin{proof}
Using $e_{m,n} = I+E_{m,n}$, the first identity follows by direct computation
of the commutator
\[
\bigl[e_{m,\ell},e_{\ell,n}\bigr]
= (I+E_{m,\ell})(I+E_{\ell,n})(I-E_{m,\ell})(I-E_{\ell,n}),
\]
as all terms cancel or vanish except $I$ and
$E_{m,\ell}E_{\ell,n}=E_{m,n}$, leaving $I+E_{m,n}=e_{m,n}$. As for the second
identity, $ge_{m,n}g^{-1}$ simplifies to
$k\varphi_{a,b}(e_{m,n})k^{-1} = ke_{m+1,n+1}k^{-1}$ in the $K$-component, with
translation coordinates equal to $0$. Since $m_0 < m+1 < n+1$, none of the row
or column indices of $e_{m+1,n+1}$ appear in the support of $k$ or $k^{-1}$.
Hence, multiplication by $k$ on the left and by $k^{-1}$ on the right does not
change this entry, while the remaining factors cancel. This finishes the proof.
\end{proof}

Since $N$ is normal, the commutator $[u,g]=(ugu^{-1})g^{-1}$ lies in $N$
for every $u \in G$. Repeating this, we find that the iterated commutator
$[u, [u,g]]$ also lies in $N$ for every $u$. With $u_i :=
e_{m_0,m_0+2^i}$ we will use this fact with $u_0$, and we further define $v_i :=
e_{m_0+2^i,m_0+2^{i+1}}$ for $i \ge 0$. By applying Lemma~\ref{lem:single}, we
then find
\begin{align*}
[u_0, [u_0,g]]&= \bigl[u_0, u_0v_0^{-1}\bigr] \\
&= u_0^2v_0^{-1}u_0^{-1}v_0u_0^{-1} \\
&= (I+E_{m_0,m_0+1})^2 (I-E_{m_0+1,m_0+2})
  (I-E_{m_0,m_0+1}) (I+E_{m_0+1,m_0+2}) (I-E_{m_0,m_0+1}) \\
&= I-E_{m_0,m_0+2} \\
&= u_1^{-1} \in N.
\end{align*}

We now define $H$ as the subgroup of $N$ generated by $g$ and $u_1$, and aim to
show that $H$ is not soluble. More precisely, we claim that for every $i \ge 1$,
the derived subgroup $H^{(i-1)}$ contains $u_{i}$, which is certainly true for
$i = 1$. By induction, Lemma~\ref{lem:single} gives $g^{2^{i}} u_i g^{-2^{i}} =
v_{i} \in H^{(i-1)}$. Applying Lemma~\ref{lem:single} once more, we find that
$u_{i+1} = \bigl[u_{i}, v_{i} \bigr] \in H^{(i)}$, finishing the proof.
\end{proof}

\section{Problem 18.50: Permuted product sets of prescribed cardinality}
\label{sec:1850}

Consider a group $G$ with distinct elements $g_1,\dots,g_n$. Then there are $n!$
ways to arrange a product of those $n$ elements, although different products may
result in the same group element. In the case of an abelian group the order is
irrelevant and all $n!$ products are equal to $g_1\cdots g_n$,
while rewritable groups are characterised by the property that at least two
permutations yield the same group element. Problem~18.50 of the Kourovka
Notebook~\cite{Kourovka}, proposed by S.~Kohl, asks whether, for every
$n\in\N$ and $k\in\{1,\dots,n!\}$, there is a group $G$ with $n$ distinct
elements whose $n!$ permuted products take exactly $k$ values. Kohl had first
raised the question on MathOverflow in March 2013~\cite{MO1850}, where he
confirmed it computationally for small $n$ and later announced that it would
appear in the eighteenth issue of the Notebook. Thirteen years on, he has
recorded the construction below as an answer on his own thread. More precisely,
we prove the following theorem using the
construction originally submitted to the Kourovka Notebook~\cite{KNote1850}:

\begin{theorem}
For every $n\in\N$ and every $k \in \{1,\dots,n!\}$, there exists a group
$G$ and pairwise distinct elements $g_1,\dots,g_n\in G$ such that the set
\[
\{g_{\sigma(1)}\cdots g_{\sigma(n)} : \sigma\in S_n\}
\]
of all permuted products of the $g_i$ has cardinality $k$.
\end{theorem}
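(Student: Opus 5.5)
The plan is to realise the required collapse of products as a quotient of a group in which all $n!$ products are distinct, and then certify the exact count with a separate finite action. The small cases are trivial. For $n\le 1$ there is nothing to prove. For $k=1$, take any abelian group with $n$ distinct elements, for instance $g_i=i\in\Z$. So the content is $n\ge 2$ and $2\le k\le n!$.

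Let $F$ be the free group on $x_1,\dots,x_n$ and set $w_\sigma:=x_{\sigma(1)}\cdots x_{\sigma(n)}$. These $n!$ reduced words are pairwise distinct. Enumerate $S_n=\{\sigma_1,\dots,\sigma_{n!}\}$ with $\sigma_1=\mathrm{id}$, and choose the surjection $c\colon S_n\to\{1,\dots,k\}$ with $c(\sigma_j)=j$ for $j<k$ and $c(\sigma_j)=k$ for $j\ge k$. Let $N\trianglelefteq F$ be the normal closure of $\{w_{\sigma_j}w_{\sigma_k}^{-1}: j>k\}$, put $G:=F/N$, and let $g_i$ be the image of $x_i$. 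By construction the set of permuted products has at most $k$ elements. It remains to show two things: the $g_i$ are distinct, and the classes $c^{-1}(1),\dots,c^{-1}(k)$ give pairwise distinct elements of $G$.

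Both facts will be proved by exhibiting a homomorphism $\rho\colon F\to\operatorname{Sym}(X)$, for a finite set $X$, that kills $N$ and separates what we need. To build $X$, take the rooted tree of injective words in $\{1,\dots,n\}$ of length at most $n$. Here $x_i$ is meant to send the word $u$ to $ui$ whenever $i$ does not occur in $u$. Reading $w_\sigma$ from the right (adjusting the convention so that the letter applied first is $x_{\sigma(n)}$) sends the root to the word of $\sigma$. Then identify all leaves of permutations in the class $c^{-1}(k)$ to a single point. These partial injections are then extended to permutations of a larger finite set. The images of the root distinguish the $k$ classes. The generators are distinguished by where they send the root.

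The main obstacle is that the relations $w_{\sigma_j}=w_{\sigma_k}$ must hold for the whole permutation, not just at the root. Without this, $\rho$ need not factor through $G$. To handle it, I would apply the extension step on a disjoint union of copies of the tree, one rooted at each point, so that every point is a root of its own tree. Alternatively, I would replace $\operatorname{Sym}(X)$ by a suitable wreath product $\operatorname{Sym}(k')\wr S_n$-type group, in which the product of all $n$ generators depends only on the ordering through the chosen statistic $c$. If the purely permutational approach becomes unwieldy, the fallback is to reduce to a small-cancellation or normal-form argument in $F/N$ showing directly that $w_{\sigma_j}\notin w_{\sigma_{j'}}N$ for $j\ne j'<k$. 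That argument would rely on the relators all having length $2n$ and sharing no long pieces with the distinguished words.
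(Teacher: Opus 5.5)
There is a genuine gap, and it lies in your choice of $c$ rather than in the permutation-representation details. The collapse pattern you prescribe is in general not realisable in any group, so $F/N$ has fewer than $k$ permuted products. Distinctness of the $g_i$ is the easy half: all relators have zero exponent sum, so $G^{\mathrm{ab}}\cong\Z^n$.

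Take $n=3$, $k=2$. Then $c$ separates only the identity, so $N$ identifies all five words $w_\sigma$ with $\sigma\neq\mathrm{id}$. In particular $x_2x_1x_3=x_2x_3x_1$ and $x_3x_1x_2=x_3x_2x_1$ in $G$. Hence $g_1$ commutes with both $g_3$ and $g_2$, so $g_1g_2g_3=g_2g_1g_3$ lies in the merged class, and $G$ has exactly one permuted product. Appending $x_4\cdots x_n$ gives the same collapse for $k=2$ and every $n\ge 3$. For larger $k$ the outcome depends on the enumeration you left unspecified: for $n=k=3$, four of the five possible choices of $\sigma_2$ already collapse.

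Every group realising a given partition is a quotient of the corresponding $F/N$. So when the partition is unrealisable, no finite action can separate your classes. Your tree sketch shows the failure concretely. As soon as the glued class contains two permutations with the same first letter, the last generator applied sends two different parents to the glued point, so it cannot extend to a permutation. The small-cancellation fallback fails for the same reason. It also fails because every relator $w_{\sigma_j}w_{\sigma_k}^{-1}$ contains the piece $w_{\sigma_k}^{-1}$, which is half its length.

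The free-group layer buys nothing: you still need a concrete group, and the partition must be chosen so that such a group exists. The paper does this with a central extension. On $\Z^n\times\Z/k\Z$ set $(u,r)(v,s)=(u+v,\,r+s+B(u,v))$, where $B(u,v)=\sum_{i<j}u_jv_i(j-1)!$, and take $g_j=(e_j,0)$. For $i<j$ one has $g_jg_i=g_ig_j\,(0,(j-1)!)$ with a central last factor. Sorting each word therefore gives $g_{\pi(1)}\cdots g_{\pi(n)}=\bigl((1,\dots,1),R(\pi)\bmod k\bigr)$. Here $R(\pi)=\sum_j a_j(\pi)(j-1)!$ is the factorial-base encoding of the inversion vector, a bijection $S_n\to\{0,\dots,n!-1\}$. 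Since $k\le n!$, exactly $k$ residues occur. The realised partition is by $R(\pi)\bmod k$, not singletons plus one large class, and distinctness is read off directly from the central coordinate.
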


The key idea of the construction is to work in a central extension where the
commutators of the elements $g_i$ record an inversion code uniquely identifying
the permutation $\sigma \in S_n$. We then pass to a finite central quotient
that identifies the inversion codes modulo $k$, thereby forcing exactly $k$
possible products.

\begin{proof}
Fix positive integers $k$ and $n$ with $k \le n!$. For
$\pi \in S_n$, define
\[
a_j(\pi) := \#\{\,i<j : \pi^{-1}(j) < \pi^{-1}(i)\},
\]
so that $a_j(\pi)$ counts the inversions of $\pi$ whose larger entry is $j$.
We have $0 \le a_j(\pi) \le j-1$ since $i < j$, while the sequence
$(a_1(\pi),\dots,a_n(\pi))$ is the usual inversion vector of $\pi$. Furthermore,
as $\pi$ ranges over $S_n$, the values $a_j(\pi)$ range independently through
the integers from $0$ to $j-1$. Define
$R(\pi):=\sum_{j=1}^n a_j(\pi)(j-1)!$. The map $R$ is a bijection between $S_n$
and the set $\{0,1,\dots,n!-1\}$.

Let $G := \Z^n\times\Z/k\Z$. For $u=(u_1,\dots,u_n)$ and $v=(v_1,\dots,v_n)$ in
$\Z^n$, set
\[
B(u,v) := \sum_{1\le i<j\le n}u_jv_i(j-1)! \in \Z/k\Z,
\]
and define a multiplication on $G$ by
\[
(u,r)(v,s) := \bigl(u+v,\,r+s+B(u,v)\bigr).
\]
The identity element is $(0,0)$, and inverses are given by
$(u,r)^{-1}=(-u,\,-r+B(u,u))$. The bilinearity of $B$ guarantees that this
operation is associative, so $G$ is a group.

Let $e_1,\dots,e_n$ be the standard basis of $\Z^n$, and set $g_j := (e_j,0)$
for each $1 \le j \le n$. If $i<j$, then $g_ig_j = (e_i+e_j,0)$, whereas $g_jg_i
= (e_i+e_j,(j-1)!) = g_ig_j(0,(j-1)!)$. The last factor is central because it
lies entirely in the $\Z/k\Z$-coordinate. Thus, relative to the ordered
pair $g_ig_j$, an inverted pair $g_jg_i$ contributes $(j-1)!$ to the central
coordinate.

Take an arbitrary $\pi\in S_n$. Sorting the word $g_{\pi(1)}g_{\pi(2)}\cdots
g_{\pi(n)}$ into the increasing word $g_1g_2\cdots g_n$ requires exactly
$a_j(\pi)$ such swaps involving the letter $j$. Therefore
\[
g_{\pi(1)}g_{\pi(2)}\cdots g_{\pi(n)}
=\bigl((1,\dots,1),\ \textstyle\sum_{j=1}^n a_j(\pi)(j-1)!\bigr)
=\bigl((1,\dots,1),\,R(\pi)\bmod k\bigr).
\]
All permuted products share the $\Z^n$-coordinate $(1,\dots,1)$, so that the
only variation is in the central coordinate. Since $R$ is a bijection between
$S_n$ and $\{0,1,\dots,n!-1\}$ and $k \le n!$, the residues $R(\pi) \bmod k$ run
through every element of $\Z/k\Z$. They cannot give more than $k$ values, and
they give at least $k$ values because $0,1,\dots,k-1$ all occur among the values
of $R$. Hence, the set of permuted products has cardinality exactly $k$.
\end{proof}

\section{Problem 19.25: Orders, totient sums, and simplicity}
\label{sec:1925}

Problem~12.39 of the Kourovka Notebook~\cite{Kourovka} asked whether a finite
group with the same order and spectrum $\omega(X) := \{\lvert x\rvert:x\in X\}$
as a finite simple group must be isomorphic to the simple group, and was
answered in the positive by Vasil'ev, Grechkoseeva, and
Mazurov~\cite{VasilevGrechkoseevaMazurov}. One can then ask whether weaker
information than the spectrum could provide similar results. One possibility
(for a finite group $X$) concerns the sum
\[
\sum_{x \in X} \varphi(\lvert x\rvert) = \sum_{m \ge 1} e_m(X)\varphi(m),
\]
where $\varphi$ is Euler's totient function and $e_m(X) := \lvert\{x\in X:\lvert
x\rvert=m\}\rvert$ is the number of elements of order $m$ in $X$. Curtin and
Pourgholi~\cite{CurtinPourgholi} studied this totient sum and observed its
connection with bidirectional edges of the directed power graph: if
$\overleftrightarrow{E}(X)$ denotes the set of bidirectional unordered pairs of
distinct vertices, then $\lvert\overleftrightarrow{E}(X)\rvert =
\tfrac12\bigl(\sum_{x\in X}\varphi(\lvert x\rvert)-\lvert X\rvert\bigr)$.
Problem~19.25, proposed by B.~Curtin and G.~R.~Pourgholi, asks whether every
group $H$ with the same order and totient sum as a simple group $G$ must also be
simple. We show that this is false by constructing such groups with $G$ simple
and $H$ not.

\begin{theorem}\label{thm:1925}
There are finite groups $G$ and $H$ of order 6048 with
\[
\sum_{g\in G}\varphi(\lvert g\rvert) = \sum_{h\in H}\varphi(\lvert h\rvert) = 23984,
\]
such that $G$ is simple and $H$ is not.
\end{theorem}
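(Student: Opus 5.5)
The plan is to take $G=\PSU(3,3)$, the unique simple group of order $6048=2^5\cdot 3^3\cdot 7$ (isomorphic to $G_2(2)'$). For $G$ the totient sum can be read off from the known conjugacy classes (e.g.\ from the ATLAS). The class sizes are
\[
1,\ 63\ (2A),\ 56\ (3A),\ 672\ (3B),\ 63,\ 63,\ 378\ (4A,4B,4C),\ 504\ (6A),\ 864,\ 864\ (7A,7B),\ 756,\ 756\ (8A,8B),\ 504,\ 504\ (12A,12B).
\]
First check that these sum to $6048$. Then weight each class size by $\varphi$ of the element order:
\[
1+63+2\cdot 728+2\cdot 504+2\cdot 504+6\cdot 1728+4\cdot 1512+4\cdot 1008=23984 .
\]
This settles the $G$-half of Theorem~\ref{thm:1925}.

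For $H$ we need a non-simple group of order $6048$ with element-order statistics $e_m(H)$ satisfying $\sum_m e_m(H)\varphi(m)=23984$. Some easy reductions narrow the search.
\begin{itemize}
\item For a direct product $A\times B$ with $\gcd(\lvert A\rvert,\lvert B\rvert)=1$, the totient sum is multiplicative, since $\varphi(\lvert(a,b)\rvert)=\varphi(\lvert a\rvert)\varphi(\lvert b\rvert)$.
\item $23984=2^4\cdot 1499$, and the $7$-part $C_7$ contributes the factor $1+6\cdot 6=37$, which does not divide $23984$. So no coprime direct decomposition splitting off the Sylow $7$-subgroup as a factor can work.
\end{itemize}
The $7$-elements must therefore be "mixed" with the rest, as in $G$. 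I would look for $H$ among semidirect products or non-split extensions, for instance a group with a normal subgroup $N$ and quotient of the form $C_7\rtimes C_3$, $\mathrm{PSL}(2,7)$ or $\mathrm{PSL}(2,8)$ (order $504$; note $6048=12\cdot 504$). Another option is a solvable group $(\text{elementary abelian})\rtimes(\text{something containing a }7\text{-element})$, such as $C_2^3\rtimes(C_7\rtimes C_3)$ combined with a group of order $36$.

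The natural test is a computer search in GAP over such constructions, computing $\sum_{h}\varphi(\lvert h\rvert)$ class by class. A normal subgroup is built into each candidate, so non-simplicity is automatic.

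Once a candidate $H$ is found, the verification mirrors the one for $G$:
\begin{itemize}
\item list the conjugacy classes of $H$ with their sizes and element orders;
\item check that the sizes sum to $6048$;
\item check that the $\varphi$-weighted sum is $23984$;
\item exhibit a proper nontrivial normal subgroup explicitly, e.g.\ as a kernel of a homomorphism to a smaller group.
\end{itemize}
If $H$ is a direct product $P\times Q$ with non-coprime factors, the count can be organised by pairs of classes, using $\lvert(x,y)\rvert=\lcm(\lvert x\rvert,\lvert y\rvert)$.

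The main obstacle is the discovery of $H$ itself. The totient sum is a fairly rigid invariant: it is $\lvert H\rvert$ plus twice the number of bidirectional edges in the directed power graph. Naive candidates, such as abelian groups or coprime direct products, overshoot or undershoot badly; abelian groups in particular have far too many high-order elements. I would first try $H=\mathrm{PSL}(2,8)\times C_{12}$-type and $\mathrm{PSL}(2,7)\times(\text{group of order }36)$-type products, since these already contain elements of orders $7$, $8$ and $12$ like $G$. I would tune the small factor by computer until the sum matches. If these all fail, I would move to non-split extensions with a normal $2$- or $3$-subgroup.
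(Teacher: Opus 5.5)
Your treatment of $G=\PSU(3,3)$ is fine. The ATLAS class sizes you quote give exactly the distribution $e_1,e_2,e_3,e_4,e_6,e_7,e_8,e_{12}=1,63,728,504,504,1728,1512,1008$ used in the paper, and the weighted sum is $23984$. The paper derives simplicity from Iwasawa's criterion via Taylor, but quoting it is acceptable.

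The genuine gap is the other half. The statement is an existence claim, and you never produce $H$. A plan to run a GAP search, with a checklist of what to verify once something turns up, does not show that a non-simple group of order $6048$ with totient sum $23984$ exists. Your coprime-factor observation ($37\nmid 23984$) is correct, but it only rules out one family and gives no candidate.

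Moreover, the two families you single out to try first provably cannot work.
In $\mathrm{PSL}(2,8)\times K$ with $\lvert K\rvert=12$, take the $216$ elements of order $7$ and the $168$ of order $9$. Since $d\mid n$ implies $\varphi(d)\mid\varphi(n)$, these alone contribute at least $(216+168)\cdot 6\cdot 12=27648>23984$.
In $\mathrm{PSL}(2,7)\times K$ with $\lvert K\rvert=36$, sum over the $\mathrm{PSL}(2,7)$-coordinate first. Each $y\in K$ then contributes $506$ if $y^2=1$ and at least $900$ otherwise. No group of order $36$ has more than $20$ solutions of $y^2=1$, so the total is at least $20\cdot 506+16\cdot 900=24520>23984$.

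The missing idea is that no simple or large factor is needed at all. The paper takes $H=C_6\times S_4\times(C_7\rtimes C_6)$, where $C_7\rtimes C_6$ is the Frobenius group of order $42$. Every element of that group outside $C_7$ has order dividing $6$, so the $7$-part adds few high-totient elements. The paper computes the order distribution of the direct product from $\lvert(x,y,z)\rvert=\lcm(\lvert x\rvert,\lvert y\rvert,\lvert z\rvert)$ and obtains $23984$. Non-simplicity is immediate from the direct factor $C_6$.
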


The proof of Theorem~\ref{thm:1925} is split into two lemmas: one constructing
$G$ and one constructing $H$.

\begin{lemma}\label{lem:1925sim}
The projective special unitary group $G:=\PSU(3,3)$ is simple of order $6048$
and has totient sum $23984$.
\end{lemma}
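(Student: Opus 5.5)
The plan is to split the statement into three independent claims: $\PSU(3,3)$ is simple, it has order $6048$, and its totient sum is $23984$. For the order, I will use the standard formula
\[
\lvert \operatorname{PSU}(n,q)\rvert = \frac{1}{\gcd(n,q+1)}\, q^{n(n-1)/2}\prod_{i=2}^{n}\bigl(q^i-(-1)^i\bigr).
\]
With $n=q=3$ we have $\gcd(3,4)=1$, so the order is $27\cdot 8\cdot 28 = 6048 = 2^5\cdot 3^3\cdot 7$.

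Simplicity is classical: $\PSU(n,q)$ is simple for $n\ge 3$ except for $\PSU(3,2)$. I would simply cite this. If a self-contained argument were preferred, I would realise $G$ concretely and use Iwasawa's criterion. Here $G$ acts primitively on the $28$ isotropic points of the Hermitian form over $\F_9$, the point stabiliser has a normal abelian-by-nilpotent subgroup whose conjugates generate $G$, and $G$ is perfect. Another route is a direct class-size argument: any normal subgroup is a union of conjugacy classes containing the identity, its order divides $6048$, and one checks that no proper nontrivial sum of class sizes with this property exists.

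The substantive step is the totient sum, which I would compute as $\sum_{m} e_m(G)\varphi(m)$. To get the counts $e_m(G)$ I would use the character table of $U_3(3)$ from the ATLAS. This gives the conjugacy classes, which have element orders $1,2,3,4,6,7,8,12$, together with their centraliser orders, and each class size is $6048/\lvert C_G(x)\rvert$. Before using the list I would sanity-check it in two ways: the class sizes must sum to $6048$, and for each prime the number of $p$-elements should be consistent with the Sylow data. Then I would group the classes by element order, multiply each count by $\varphi(m)$, and add.

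I expect this step to be the main obstacle. It is error-prone because a single misremembered centraliser order shifts the total by hundreds, and a first pass from memory did not reproduce $23984$. So I would not rely on recalled tables. Instead I would verify the counts independently, either by generating $G$ as a permutation group of degree $28$ and enumerating element orders by machine (as the Lean formalisation presumably does, by a decidable computation), or by cross-checking each $e_m$ through the identity
\[
e_m = \varphi(m)\cdot\#\{\text{cyclic subgroups of order } m\}.
\]
The final arithmetic should then yield $23984$.
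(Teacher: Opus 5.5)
Your plan follows the paper's proof. It uses the same order formula and gets simplicity from Iwasawa's criterion for the action on the $28$ isotropic points. The paper takes $A$ to be the order-$3$ group of unitary transvections centred at the point, which is abelian and normal in the stabiliser $3^{1+2}{:}8$. So the standard criterion applies directly, and no ``abelian-by-nilpotent'' variant is needed. The totient sum is then computed as $\sum_m e_m(G)\varphi(m)$, as you propose.

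What is missing is the only substantive input to the third claim: you never write down the $e_m(G)$. By your own account, the values you recalled did not give $23984$, so as written the totient sum is asserted rather than proved. The paper states the distribution explicitly (the one checked in Lean), and it agrees with the ATLAS data for $U_3(3)$. The centraliser orders are $96$ for $2A$; $108, 9$ for $3A, 3B$; $96, 96, 16$ for $4A, 4B, 4C$; $12$ for $6A$; $7, 7$ for $7A, 7B$; $8, 8$ for $8A, 8B$; and $12, 12$ for $12A, 12B$. The class sizes $6048/\lvert C_G(x)\rvert$ then give $e_1=1$, $e_2=63$, $e_3=56+672=728$, $e_4=63+63+378=504$, $e_6=504$, $e_7=864+864=1728$, $e_8=756+756=1512$ and $e_{12}=504+504=1008$, which add up to $6048$.

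Your proposed Sylow checks confirm two of these. The $28$ Sylow $3$-subgroups $3^{1+2}$ have exponent $3$ and pairwise trivial intersection, so $e_3=28\cdot 26$. A Sylow $7$-normaliser of order $21$ gives $e_7=288\cdot 6$. Then
\[
1+63+2\cdot 728+2\cdot 504+2\cdot 504+6\cdot 1728+4\cdot 1512+4\cdot 1008=23984,
\]
which completes the argument along the paper's lines.
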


Recall the definition of $\PSU(3,3)$. Consider a $3$-dimensional vector space
$V$ over $\F_9$. The field $\F_9$ has a nontrivial automorphism $a\mapsto a^3$,
which plays the role of complex conjugation, and we equip $V$ with a
nondegenerate Hermitian form $f$ with respect to this automorphism. Among the
one-dimensional subspaces of $V$, exactly $28$ are
isotropic~\cite[Lemma~10.4]{TaylorClassicalGroups}, meaning that they are of the
form $\langle v \rangle$ for some nonzero $v \in V$ satisfying $f(v,v) = 0$. We
denote these $28$ isotropic one-dimensional subspaces by $\Omega$. The group
$\SU(3,3)$ is then defined as the group of linear automorphisms $L$ of $V$ that preserve
$f$ and have determinant $1$. Every such $L$ permutes the elements of $\Omega$,
because if $f(v, v) = 0$ for some $v \neq 0$, then $L(v) \neq 0$ has the
property that $f(L(v), L(v)) = f(v, v) = 0$. The group $\PSU(3,3)$ is now
defined as the central quotient $\SU(3,3)/Z(\SU(3,3))$, and this quotient still
acts on $\Omega$.

The original solution note~\cite{KNote1925} proved the simplicity of $\PSU(3,3)$
using a concrete permutation representation as a subgroup of
$S_{28}$~\cite{AtlasU33}, together with exhaustive computations involving
conjugacy classes and normal closures.
However, a cleaner and less computational proof uses the natural action of
$\PSU(3,3)$ on its $28$ isotropic points to prove
simplicity by Iwasawa's criterion~\cite{Iwasawa1941}; a perfect finite group $G$
is simple if it acts faithfully and primitively on a set $X$ in such a way that,
for some $x \in X$, the stabiliser $G_x$ of $x$ contains an abelian normal
subgroup $A$ such that the conjugates of $A$ generate $G$. A proof along these
lines can be found in~\cite[Theorem~10.15]{TaylorClassicalGroups}, although we
give a brief overview here.

\begin{proof}
Since $V$ is a nondegenerate $3$-dimensional space containing isotropic points,
it has Witt index $1$. It therefore follows
from~\cite[Theorem~10.12(i)]{TaylorClassicalGroups} that the natural action of
$G$ on $\Omega$ is faithful and doubly transitive, and hence primitive.

Fix a point $P := \langle u\rangle\in\Omega$ and, following Taylor's convention
that $f$ is linear in its first argument, define $t_a(v) := v+a f(v,u)u$ for
each $a\in\F_9$ satisfying $a+a^3=0$. These transformations are unitary
transvections: they preserve $f$, have determinant $1$, and fix the orthogonal
complement $\{v\in V:f(v,u)=0\}$ pointwise. The images in $G$ of the
transformations $t_a$, as $a$ ranges over all the elements satisfying $a+a^3=0$,
form an abelian subgroup $A$. Moreover, conjugation by an element of the
stabiliser $G_P$ sends a transvection with centre $P$ to another such
transvection. Hence $A$ is normal in $G_P$.

The conjugates of $A$ are the analogous transvection subgroups associated with
the other points of $\Omega$. By Lemmas~10.13--10.14 and the proof of
Theorem~10.15 in Taylor~\cite{TaylorClassicalGroups}, the unitary transvections
generate $\SU(3,3)$ and belong to its commutator subgroup. Consequently, their
images generate $G$ and belong to its commutator subgroup. It therefore follows
that $G$ is perfect and Iwasawa's criterion now shows that $G$ is simple. For
the complete proof, we refer to Taylor~\cite{TaylorClassicalGroups}.

The general formula for the order of projective special unitary
groups~\cite[p.~118]{TaylorClassicalGroups} is
\[
  \lvert\PSU(n,q)\rvert
  =
  \frac{q^{n(n-1)/2}}{\gcd(n,q+1)}
  \prod_{i=2}^{n}\bigl(q^i-(-1)^i\bigr).
\]
Specialising to $n = q = 3$ gives
\[
  \lvert\PSU(3,3)\rvert
  =
  \frac{3^3(3^2-1)(3^3+1)}{\gcd(3,4)}
  =
  6048.
\]
The finite element-order distribution used in the Lean verification is
\[
\begin{array}{c|cccccccc}
m&1&2&3&4&6&7&8&12\\ \hline
e_m(G)&1&63&728&504&504&1728&1512&1008
\end{array}.
\]
Therefore
\[
\sum_{g\in G}\varphi(\lvert g\rvert) =
1+63+728\cdot2+504\cdot2+504\cdot2+1728\cdot6+1512\cdot4+1008\cdot4
= 23984. \qedhere
\]
\end{proof}

\begin{lemma}\label{lem:1925nsim}
Let $F := C_7 \rtimes C_6$ be the semidirect
product in which a generator of $C_6$ acts on $C_7$ by multiplication by $3$
modulo $7$, and $H := C_6 \times S_4 \times F$. Then $H$ has order $6048$ and
totient sum $23984$, but $H$ is not simple.
\end{lemma}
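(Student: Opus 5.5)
The order is immediate: $\lvert C_6\rvert\cdot\lvert S_4\rvert\cdot\lvert F\rvert = 6\cdot 24\cdot 42 = 6048$. Non-simplicity is equally quick: the factor $C_6\times 1\times 1$ is a normal subgroup of $H$ that is neither trivial nor all of $H$. So the real content of the lemma is the totient sum. I would establish it by computing the element-order distribution $e_m(H)$ explicitly, mirroring the table given for $\PSU(3,3)$ in Lemma~\ref{lem:1925sim}.

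The first step is the order distribution of each factor.
\begin{itemize}
\item $C_6$ has $1,1,2,2$ elements of orders $1,2,3,6$.
\item $S_4$ has $1,9,8,6$ elements of orders $1,2,3,4$.
\item $F$ is the Frobenius group of order $42$, since $3$ has order $6$ modulo $7$. It has the identity, $6$ elements of order $7$, and $7$ conjugate complements isomorphic to $C_6$ meeting pairwise trivially. This gives $7$, $14$ and $14$ elements of orders $2$, $3$ and $6$ respectively, with no elements of order $14$, $21$ or $42$.
\end{itemize}

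An element $(x,y,z)\in H$ has order $\lcm(\lvert x\rvert,\lvert y\rvert,\lvert z\rvert)$. Since $\varphi(m)=\varphi(m_2)\varphi(m_3)\varphi(m_7)$ for the prime-power parts $m_p$ of $m$, I would organise the count by prime parts rather than by full orders.

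For each factor I record the joint distribution of the pair ($2$-part, $3$-part) of element orders, together with the $7$-part for $F$. Then I sum
\[
\sum_{(x,y,z)}\varphi\bigl(\lcm(\lvert x\rvert,\lvert y\rvert,\lvert z\rvert)\bigr).
\]
The $7$-part factors off cleanly, because elements of order $7$ in $F$ have trivial $2$- and $3$-parts. So the $36$ elements of $F$ that are not of order $7$ contribute one sum with $7$-part $1$. The $6$ elements of order $7$ contribute $6\cdot\varphi(7)=36$ times the sum over $C_6\times S_4$ with $z$ replaced by the identity.

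This reduces everything to a finite table over $C_6\times S_4\times C_6$-type data. Concretely, I would tabulate $e_m(H)$ for $m\in\{1,2,3,4,6,7,12,14,21,28,42,84\}$, then check that these counts sum to $6048$ and that $\sum_m e_m(H)\varphi(m)=23984$.

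The only real obstacle is error-free bookkeeping in this lcm convolution, especially the interaction of the $3$-parts of $C_6$, $S_4$ and $F$. As a cross-check I would compute the $\{2,3\}$-part by two independent orderings of the convolution, first $C_6$ with $S_4$ and then $F$, and compare. In the formal setting this is a decidable finite computation over a concrete model of $H$.
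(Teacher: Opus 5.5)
Your proposal is correct and follows the paper's proof. It uses the same order count, the same normal subgroup $C_6\times 1\times 1$, the same factor order distributions (you get $F$'s from its Frobenius structure, while the paper checks $(v,j)^d=(0,0)$ directly), and the same lcm tabulation of $e_m(H)$ over $m\in\{1,2,3,4,6,7,12,14,21,28,42,84\}$. Carried out, the arithmetic you leave as a plan gives $\sum_{g\in C_6\times S_4}\varphi(\lvert g\rvert)=316$, so the order-$7$ elements of $F$ contribute $36\cdot 316=11376$ and the other $36$ elements contribute $12608$, totalling $23984$; a complete proof should display these counts rather than only describe them.
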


\begin{proof}
We immediately have $\lvert H\rvert = 6 \cdot 24 \cdot 42 = 6048$, while $H$ is
not simple as $C_6 \times \{1\} \times \{1\}$ is a proper nontrivial normal
subgroup. What remains is to compute $\sum_{h\in H}\varphi(\lvert h\rvert)$. The
factors $C_6$ and $S_4$ have order distributions
\[
\begin{array}{c|rrrr}
 m&1&2&3&6\\ \hline
 e_m(C_6)&1&1&2&2
\end{array}
\qquad
\begin{array}{c|rrrr}
 m&1&2&3&4\\ \hline
 e_m(S_4)&1&9&8&6
\end{array}.
\]
As for $F$, we identify $C_7$ with the additive group of $\F_7$. The normal
subgroup $C_7$ contributes one element of order $1$ and six elements of order
$7$. Let $j \neq 0$ have order $d$ in $C_6$. Then the scalar $3^j$ acting on
$\F_7$ also has order $d$, since $3$ has order $6$ in $(\Z/7\Z)^{\times}$. So
for any $v\in\F_7$,
\[
(v,j)^d = \bigl((1+3^j+\cdots+3^{(d-1)j})v,0\bigr) = (0,0),
\]
because $3^j \neq 1$ and $(3^j)^d = 1$. Then the order of $(v,j)$ is also $d$
regardless of $v$. Therefore the order distribution of $F$ is given by
\[
\begin{array}{c|rrrrr}
 m&1&2&3&6&7\\ \hline
 e_m(F)&1&7&14&14&6
\end{array}.
\]
In a direct product, the order of an element is the least common multiple of the
orders of its components, so the order distribution of $H$ is
\[
\begin{array}{c|rrrrrrrrrrrr}
 m&1&2&3&4&6&7&12&14&21&28&42&84\\ \hline
 e_m(H)&1&159&404&96&3324&6&1200&114&156&72&372&144
\end{array}.
\]
Then the totient sum is
\[
\begin{aligned}
\sum_{h\in H}\varphi(\lvert h\rvert)
&=1+159+404\cdot2+96\cdot2+3324\cdot2+6\cdot6\\
&\quad+1200\cdot4+114\cdot6+156\cdot12+72\cdot12+372\cdot12+144\cdot24\\
&=23984,
\end{aligned}
\]
as desired.
\end{proof}

Thus, by Lemmas~\ref{lem:1925sim} and~\ref{lem:1925nsim}, we have groups $G$ and
$H$ with $\lvert G\rvert=\lvert H\rvert=6048$ and $\sum_{g\in G}\varphi(\lvert
g\rvert)=\sum_{h\in H}\varphi(\lvert h\rvert)=23984$, with $G$ simple and $H$
non-simple, which completes the proof of Theorem~\ref{thm:1925}.

\begin{remark}
The group $H$ can be written in a form involving a wreath product. In the
following,
\[
  C_2\wr S_3=(C_2)^3\rtimes S_3
\]
denotes the imprimitive wreath product for the natural action of $S_3$ on three
letters. Regard $(C_2)^3$ as the vector space $\F_2^3$. The subspace
\[
  \Delta:=\langle(1,1,1)\rangle
\]
is fixed pointwise by $S_3$ and the subspace
\[
  U:=\{(u_1,u_2,u_3):u_1+u_2+u_3=0\}
\]
is $S_3$-invariant. Since
\[
  \F_2^3 = \Delta \oplus U,
\]
we have
\[
  (C_2)^3 \rtimes S_3 \cong \Delta \times (U \rtimes S_3).
\]
The group $U \rtimes S_3$ is the affine general linear group
$\operatorname{AGL}(2,2)$ and therefore isomorphic to $S_4$. Therefore
\[
  C_2\wr S_3\cong C_2\times S_4.
\]
It follows that
\[
  H\cong C_3\times(C_2\wr S_3)\times(C_7\rtimes C_6).
\]
This merely rewrites the same non-simple group, since
$C_3\times C_2\cong C_6$.
\end{remark}

\begin{remark}
The non-simple partner to $G$ is not unique. Let
\[
  W:=C_6\wr S_2=(C_6\times C_6)\rtimes S_2,
\]
where $S_2$ interchanges the two $C_6$-factors, and put
\[
  H':=C_2\times W\times F.
\]
Then $\lvert H'\rvert=2\cdot72\cdot42=6048$. The order distribution of $W$ is
\[
\begin{array}{c|rrrrrr}
 m&1&2&3&4&6&12\\ \hline
 e_m(W)&1&9&8&6&36&12
\end{array}.
\]
Indeed, the base subgroup $(C_6)^2$ contributes $1,3,8,24$ elements of orders
$1,2,3,6$, respectively. In the nontrivial coset, an element $(u,v)\tau$
satisfies
\[
  ((u,v)\tau)^2=(uv,uv),
\]
so its order is $2\lvert uv\rvert$. This gives $6,6,12,12$ elements of orders
$2,4,6,12$, respectively, in that coset. Hence the order distribution of $H'$
is
\[
\begin{array}{c|rrrrrrrrrrrr}
 m&1&2&3&4&6&7&12&14&21&28&42&84\\ \hline
 e_m(H')&1&159&134&96&3594&6&1200&114&48&72&480&144
\end{array}.
\]
Again, the entries sum to $6048$, and
\[
  \sum_{h'\in H'}\varphi(\lvert h'\rvert)=23984.
\]
Thus $H'$ is another non-simple group with the same order and the same totient
sum as $G$. Moreover, $H$ and $H'$ are not isomorphic, since
\[
  e_3(H)=404,
  \qquad
  e_3(H')=134.
\]
\end{remark}

\section{Problem 20.125: A surjective, non-injective Rota--Baxter operator}
\label{sec:20125}

A \emph{Rota--Baxter operator} (of weight $1$) on a group $G$ is a map $B\colon
G\to G$ satisfying
\[
B(g)\,B(h)=B\bigl(g\cdot B(g)\,h\,B(g)^{-1}\bigr)
\qquad(g,h\in G).
\]
Rota--Baxter operators were first proposed by G.~Baxter~\cite{Baxter} in the
context of commutative algebras, and only recently did Guo, Lang and
Sheng~\cite{GuoLangSheng} introduce the corresponding notion for (Lie) groups.
Bardakov and Gubarev~\cite{BardakovGubarev} extended this study to arbitrary
groups, providing many general constructions of such Rota--Baxter operators. For
example, they showed that any homomorphism from a group to one of its abelian
subgroups is a Rota--Baxter operator. In that paper, they also ask the following
question, which is now recorded as Problem~20.125 of the Kourovka
Notebook~\cite{Kourovka}: does there exist a non-abelian group $G$ and a
Rota--Baxter operator $B\colon G\to G$ such that $B$ is surjective but not
injective? As first explained in the submission to the Kourovka
Notebook~\cite{KNote20125}, the answer is yes; such a pair $(G, B)$ exists.

\begin{theorem}
There exists a non-abelian group $G$ and a Rota--Baxter operator $B\colon G\to
G$ that is surjective but not injective.
\end{theorem}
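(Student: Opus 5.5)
The plan is to build the example as a direct product of a non-abelian group carrying a \emph{bijective} Rota--Baxter operator and an abelian group carrying a \emph{surjective, non-injective} one. The operator on the product acts componentwise.

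\textbf{Step 1 (products).} If $B_1$ and $B_2$ are Rota--Baxter operators on $G_1$ and $G_2$, then $B(g_1,g_2):=(B_1(g_1),B_2(g_2))$ is a Rota--Baxter operator on $G_1\times G_2$. Multiplication, conjugation and inversion in a direct product are all computed coordinatewise, so the defining identity for $B$ splits into the two identities for $B_1$ and $B_2$. Moreover, $B$ is surjective if and only if both $B_i$ are, and $B$ is injective if and only if both $B_i$ are.

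\textbf{Step 2 (the non-abelian factor).} On any group, the inversion map $B(g):=g^{-1}$ is a Rota--Baxter operator. Indeed, the right-hand side of the identity is
\[
B\bigl(g\,g^{-1}hg\bigr)=B(hg)=g^{-1}h^{-1},
\]
which equals $B(g)B(h)=g^{-1}h^{-1}$. Inversion is a bijection, so we take $G_1:=S_3$ with this operator.

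\textbf{Step 3 (the abelian factor).} Take $G_2:=C_{p^\infty}$, the Prüfer $p$-group, written additively, with $B_2(x):=px$. This is a homomorphism from $G_2$ to itself, and $G_2$ is abelian. Hence $B_2$ is a Rota--Baxter operator, either by the Bardakov--Gubarev observation quoted above or by direct check: conjugation is trivial, so the identity reduces to $B_2(x)+B_2(y)=B_2(x+y)$. Multiplication by $p$ is surjective because $C_{p^\infty}$ is divisible. It is not injective, since its kernel is the subgroup of order $p$.

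\textbf{Conclusion.} By Step~1, the group $G:=S_3\times C_{p^\infty}$ with $B(s,x):=(s^{-1},px)$ carries a Rota--Baxter operator. It is surjective because both components are, and it is not injective because it sends both $(1,0)$ and $(1,x_0)$ to the identity, where $x_0$ is any element of order $p$. Finally, $G$ is non-abelian because $S_3$ is.

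The only points requiring care are the inversion computation in Step~2 and the coordinatewise verification in Step~1. Neither presents a real obstacle.
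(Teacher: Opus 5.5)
Your proof is correct and follows the paper's approach: a direct product of $S_3$, with the bijective inversion operator, and an abelian group carrying a surjective, non-injective endomorphism, checked componentwise. The only difference is the abelian factor. You use the Prüfer group $C_{p^\infty}$ with $x\mapsto px$, whereas the paper uses $\Z^{\N}$ with the left shift $s(f)(n)=f(n+1)$; both choices work equally well.
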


\begin{proof}
Let $G := S_3 \times \Z^{\N}$, where $S_3$ is the symmetric group on three
elements and $\Z^{\N}$ is the set of functions from $\N$ to $\Z$. On the latter
factor, group multiplication (which we will write additively) is defined as
function addition: $(g+h)(n) := g(n) + h(n)$. We then define $B \colon G \to G$
componentwise by $B(\sigma, f) := \bigl(\sigma^{-1},\; s(f)\bigr)$, where $s
\colon \Z^{\N} \to \Z^{\N}$ is the left-shift operator $s(f)(n) := f(n+1)$.

Since $B = (B_1, B_2)$ is defined componentwise, it suffices to verify the
Rota--Baxter equation separately for each factor. On the first factor, the
equation simplifies to the identity $g^{-1}h^{-1} = (hg)^{-1}$. On the abelian
factor $\Z^{\N}$, the right-hand side reduces to $s(g + h)$, which equals the
left-hand side because $s$ is a homomorphism.

Since $B_1$ is bijective, it suffices to check that $B_2 = s$ is surjective and
non-injective, but both of these are straightforward. Indeed, for any $g \in
\Z^{\N}$ one can define $h$ by $h(1) := 0$, and $h(n) := g(n-1)$ for all $n \ge
2$. Then $s(h) = g$, proving surjectivity. For non-injectivity, we can
define $h$ by $h(1) := g(1) + 1$ and $h(n) := g(n)$ for all $n \ge 2$ to get
$s(g) = s(h)$ with $g \neq h$. Finally, $G$ is non-abelian because $S_3$ is.
The same construction works with $S_3$ replaced by any non-abelian group.
\end{proof}

\section{Problem 21.8: Groups generated by horizontal class transpositions}
\label{sec:218}

Let $r(m) := \{r + tm \mid t \in \Z\}$ denote the residue class of $r$ modulo
$m$, where we assume $0 \le r < m$. Then, for each pair of disjoint residue
classes $r_1(m_1)$ and $r_2(m_2)$, we define the \emph{class transposition}
$\tau_{r_1(m_1), r_2(m_2)}$ as the mapping that exchanges $r_1 + t m_1$ with
$r_2 + t m_2$ for each integer $t$, while it fixes everything else. These class
transpositions generate a subgroup, denoted by $\CT(\Z)$, of the permutation
group of $\Z$. This subgroup was introduced by Kohl~\cite{KohlSimple}, who
established that $\CT(\Z)$ has a number of interesting properties, which include
being a countably infinite simple group with an uncountable collection of simple
subgroups and admitting embeddings of a variety of other groups. He later showed
that this group has connections to the Collatz conjecture as
well~\cite{KohlCollatz}.

Problem~21.8 of the Kourovka Notebook~\cite{Kourovka}, proposed by
V.~G.~Bardakov and A.~L.~Iskra, concerns class transpositions with the same
modulus, sometimes called \emph{horizontal class transpositions}. Such class
transpositions also generate a subgroup, which is denoted by $\CT_k$. Since
$\CT_k$ permutes the residue classes modulo $k$, it is isomorphic to the
symmetric group $S_k$. We then define $\CT_{(k)} := \langle \CT_2, \CT_3,
\ldots, \CT_k \rangle$ as the subgroup generated by horizontal class
transpositions with modulus at most $k$. Bardakov and Iskra~\cite{Bardakov}
developed the theory of horizontal class transpositions and used the computer
algebra system GAP to show that, for small $k \ge 4$ and with
$N$ the least common multiple of $2, 3, \ldots, k$, $\CT_{(k)}$ is isomorphic to
the symmetric group $S_N$. They then conjectured that this pattern continues
and, using an improved version of the original submission~\cite{KNote218}, we
prove that their conjecture is true for all $k \ge 4$. Pan~\cite{Pan}
recently proved the same result independently, using multiple transitivity.

\begin{theorem}\label{thm:218}
Let $L_k := \lcm(2, 3, \dots, k)$. For all $k \ge 4$, the group $\CT_{(k)}$ is
isomorphic to the symmetric group $S_{L_k}$.
\end{theorem}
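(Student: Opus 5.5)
Every horizontal class transposition of modulus $m\le k$ exchanges $r_1+tm$ with $r_2+tm$ for all $t$, so it maps each residue class modulo $L_k$ onto a residue class modulo $L_k$. Concretely, it permutes the classes $0(L_k),\dots,(L_k-1)(L_k)$ while preserving each element's position inside its class: $x=c+sL_k \mapsto c'+sL_k$. Hence $\CT_{(k)}$ acts faithfully on the set $\Omega:=\Z/L_k\Z$, because a permutation of $\Z$ of this form is determined by the induced permutation of classes. This gives an embedding $\CT_{(k)}\hookrightarrow \operatorname{Sym}(\Omega)\cong S_{L_k}$. It remains to show that the image is all of $\operatorname{Sym}(\Omega)$.

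In $\Omega$, the transposition $\tau_{r_1(m),r_2(m)}$ is the permutation exchanging $x\mapsto x+(r_2-r_1)$ on the $L_k/m$ points $x\equiv r_1 \pmod m$. It is therefore a product of $L_k/m$ disjoint transpositions. Write $G_m:=\CT_m$ for the image, which is isomorphic to $S_m$ acting on the $m$ blocks (the classes modulo $m$) of $\Omega$. The plan has three steps.

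\textbf{Primitivity.} First, $\langle G_2,\dots,G_k\rangle$ is transitive: from $G_m$ we can move between any two classes modulo $m$ by adding an appropriate shift, and by the Chinese remainder theorem combining $m=p^a$ for the prime powers in $L_k$ reaches every point. Next, a block system for the whole group must be a block system for each $G_m$. Since $G_m\cong S_m$ acts on the $m$ classes and also shifts within them, any nontrivial block $\Delta$ containing $0$ is analysed by studying the stabiliser of $0$. Using $G_m$ for a prime $m=p$ and for the prime power $m=p^a\le k$, I expect to show that $\Delta$ is either $\{0\}$ or all of $\Omega$. An alternative is to show directly that the group is $2$-transitive, since the point stabiliser still contains the transpositions of moduli coprime to the relevant residues.

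\textbf{A small-support element.} Next I want an element with few moved points, such as a $3$-cycle or a transposition. The generators $\tau_{0(2),1(2)}$ and $\tau_{0(3),1(3)}$ (with $k\ge4$ also using modulus $4$) have supports $\Omega$ and $\tfrac23\Omega$ respectively. Their commutator or product, raised to a suitable power, should have support concentrated on a single class modulo $\lcm(2,3)=6$ or $12$, which has only about $L_k/6$ points. That is still large, so instead I would aim for a prime cycle whose length $p$ satisfies $p\le n-3$. If $g$ and $h$ are products of disjoint transpositions whose supports meet in a controlled way, then $(gh)$ has a cycle structure that can be computed explicitly. Taking the right power isolates a single cycle of prime length $p$ with $p<L_k-2$, and Jordan's theorem then shows that a primitive group containing such a cycle contains $A_{L_k}$.

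\textbf{Finishing.} Finally, the generators of $G_2$ contain $\tau_{0(2),1(2)}$, which is a product of $L_k/2$ transpositions. Since $4\mid L_k$ for $k\ge4$, $L_k/2$ is even, so this element is even. However, $\tau_{0(4),1(4)}$ is a product of $L_k/4$ transpositions. I would check the parity of the generators using $v_2(L_k)$. If $L_k/m$ is odd for $m=2^{\lfloor\log_2 k\rfloor}$, we get an odd permutation and hence the full symmetric group $S_{L_k}$. This is the main obstacle: producing an element to which Jordan's theorem applies, and verifying the parity cleanly for all $k\ge4$. The hypothesis $k\ge4$ should enter exactly here, because for $k=3$ all generators are even and the group is only $A_6$-like.
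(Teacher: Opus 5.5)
Your embedding of $\CT_{(k)}$ into $\operatorname{Sym}(\Z/L_k\Z)$ matches the paper, and your parity step is fine. In fact it never fails: the $2$-part $m$ of $L_k$ is at most $k$, so $\tau_{0(m),1(m)}$ is a product of an odd number $L_k/m$ of transpositions, which is exactly the paper's last step. The genuine gap is everything in between. Primitivity is only announced (``I expect to show''). More seriously, you never produce an element to which Jordan's theorem applies, and this is the heart of the problem rather than a routine detail. A word in class transpositions of moduli $m_1,\dots,m_s$ is $\lcm(m_1,\dots,m_s)$-periodic. So if this lcm is some $M<L_k$, its support on $\Z/L_k\Z$ consists of $L_k/M$ translated copies of one pattern. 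For instance, $(\tau_{0(3),1(3)}\tau_{0(4),1(4)})^3$ is the single transposition $(6\;7)$ on $\Z/12\Z$, but on $\Z/L_k\Z$ it is a product of $L_k/12$ disjoint transpositions. A prime cycle must therefore come from words mixing moduli whose lcm is all of $L_k$. ``Taking the right power isolates a single cycle'' gives no mechanism for controlling the cycle type of such words uniformly in $k$.

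The paper supplies that mechanism by induction on $k$. If $L_{k+1}=L_k$ nothing changes. Otherwise $k+1=p^m$ and $L_{k+1}=pL_k$; viewing the points as a $p\times L_k$ grid, the hypothesis $\CT_{(k)}=S_{L_k}$ provides every \emph{synchronised} permutation (the same column permutation in each row). These make primitivity easy. When $m\ge2$, the commutator of $\tau_{0(k+1),1(k+1)}$ with the synchronised swap of columns $0$ and $2$ is a $3$-cycle, because the two supports meet only in $\{0,1,2\}$; here $k\ge4$ is used to exclude $p=m=2$. For $k+1$ prime, a further conjugation trick yields a $3$-cycle, and the base case $k=4$ is an explicit computation in $S_{12}$.

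Your closing remark about $k=3$ is also wrong, and correcting it shows exactly where the missing content lies. On $\Z/6\Z$ the generator $\tau_{0(2),1(2)}=(0\;1)(2\;3)(4\;5)$ is odd, and $\CT_{(3)}$ is primitive. Yet under the relabelling $0\mapsto\infty$, $1\mapsto0$, $5\mapsto1$, $3\mapsto2$, $4\mapsto3$, $2\mapsto4$, all four generators become M\"obius maps over $\F_5$. Hence $\CT_{(3)}\le\operatorname{PGL}(2,5)$, a group of order $120$, so it is a proper subgroup of $S_6$. Thus transitivity, primitivity and parity all hold for $k=3$, and the hypothesis $k\ge4$ can only be used in the step you left open.
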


\begin{proof}
We first explain how $\CT_{(k)}$ embeds in $S_{L_k}$. Every class transposition
$\tau_{r_1(d), r_2(d)}$ with $2 \le d \le k$ is periodic with period $d \mid
L_k$, so it is also $L_k$-periodic. Therefore each class transposition restricts
to a permutation of $\{0, 1, \dots, L_k - 1\}$, which induces a homomorphism
$\rho \colon \CT_{(k)} \to S_{L_k}$ defined by restricting each class
transposition $\tau_{r_1(d), r_2(d)}$ to $\{0, 1, \dots, L_k - 1\}$, equivalent
to $L_k / d$ periods of size $d$. The periodic behaviour guarantees this map is
injective, so $\CT_{(k)}$ is isomorphic to a subgroup of $S_{L_k}$.

Specifically, $\tau_{r_1(d), r_2(d)}$ maps to the permutation $\sigma_{r_1,
r_2}^{(d)}$, which acts periodically with period $d$ and whose action on $\{0,
1, \dots, d-1\}$ swaps $r_1$ and $r_2$. More concretely, $\sigma_{r_1,
r_2}^{(d)}$ is defined by the equation
\[
\sigma_{r_1, r_2}^{(d)}(n) := \bigl\lfloor n / d \bigr\rfloor \cdot d
  + \tau_{r_1, r_2}\bigl(n \bmod d\bigr),
  \qquad n \in \{0, \dots, L_k - 1\},
\]
where $\tau_{r_1, r_2}$ is the transposition of $r_1$ and $r_2$ in $\{0, \dots,
d - 1\}$.

For the other direction of Theorem~\ref{thm:218}, it suffices to show that the
image $\rho(\CT_{(k)})$ is all of $S_{L_k}$, which we prove by induction on $k$.
When $k = 4$, $L_k = 12$ and we show $\CT_{(4)} = S_{12}$ by constructing all
adjacent transpositions.

A direct computation shows that $\bigl(\sigma_{0,1}^{(3)}
\sigma_{0,1}^{(4)}\bigr)^3 = (6\;7)$. From $(6\;7)$, every adjacent
transposition is obtained by conjugation with class transposition generators:
\begin{align*}
(0\;1)   &= \left(\sigma_{0,3}^{(4)} \sigma_{0,1}^{(3)}\right)^2
  \sigma_{0,3}^{(4)} \;(6\;7)\; \sigma_{0,3}^{(4)}
  \left(\sigma_{0,1}^{(3)} \sigma_{0,3}^{(4)}\right)^2, \\
(1\;2)   &= \sigma_{2,3}^{(4)}
  \left(\sigma_{0,1}^{(3)} \sigma_{0,3}^{(4)}\right)^2 (6\;7)\;
  \left(\sigma_{0,3}^{(4)} \sigma_{0,1}^{(3)}\right)^2 \sigma_{2,3}^{(4)}, \\
(2\;3)   &= \sigma_{0,1}^{(3)} \sigma_{0,3}^{(4)} \sigma_{0,1}^{(2)}
  \sigma_{0,1}^{(3)} \sigma_{0,2}^{(4)} \;(6\;7)\;
  \sigma_{0,2}^{(4)} \sigma_{0,1}^{(3)} \sigma_{0,1}^{(2)}
  \sigma_{0,3}^{(4)} \sigma_{0,1}^{(3)}, \\
(3\;4)   &= \sigma_{0,2}^{(4)} \sigma_{0,1}^{(3)} \sigma_{0,2}^{(4)}
  \;(6\;7)\; \sigma_{0,2}^{(4)} \sigma_{0,1}^{(3)} \sigma_{0,2}^{(4)}, \\
(4\;5)   &= \sigma_{0,2}^{(4)} \sigma_{1,3}^{(4)}
  \;(6\;7)\; \sigma_{1,3}^{(4)} \sigma_{0,2}^{(4)}, \\
(5\;6)   &= \sigma_{1,3}^{(4)} \;(6\;7)\; \sigma_{1,3}^{(4)}, \\
(7\;8)   &= \sigma_{0,2}^{(3)} \;(6\;7)\; \sigma_{0,2}^{(3)}, \\
(8\;9)   &= \sigma_{1,2}^{(3)} \sigma_{0,1}^{(4)} \sigma_{0,2}^{(3)}
  \;(6\;7)\; \sigma_{0,2}^{(3)} \sigma_{0,1}^{(4)} \sigma_{1,2}^{(3)}, \\
(9\;10)  &= \left(\sigma_{0,1}^{(2)} \sigma_{0,2}^{(3)}\right)^2
  \;(6\;7)\; \left(\sigma_{0,2}^{(3)} \sigma_{0,1}^{(2)}\right)^2, \\
(10\;11) &= \sigma_{0,2}^{(4)} \sigma_{0,2}^{(3)} \sigma_{0,1}^{(2)}
  \sigma_{0,2}^{(3)} \;(6\;7)\; \sigma_{0,2}^{(3)} \sigma_{0,1}^{(2)}
  \sigma_{0,2}^{(3)} \sigma_{0,2}^{(4)}.
\end{align*}
These identities can all be verified computationally.

Now suppose $k \ge 4$ and $\CT_{(k)} = S_{L_k}$. If $L_{k+1} = \lcm(2, \dots,
k+1)$ equals $L_k$, then $\CT_{(k)}$ and $\CT_{(k+1)}$ act on the same set $\{0,
\dots, L_k - 1\}$ and every generator of $\CT_{(k)}$ is also a generator of
$\CT_{(k+1)}$, so
\[
S_{L_k} = \CT_{(k)} \le \CT_{(k+1)} \le S_{L_{k+1}} = S_{L_k},
\]
forcing $\CT_{(k+1)} = S_{L_{k+1}}$.

We may therefore assume $L_{k+1} \neq L_k$, in which case $k+1$ must be a prime
power $p^m$, and $L_{k+1} = p L_k$. We think of $\{0, \dots, L_{k+1} - 1\}$ as a
grid with $p$ rows of $L_k$ columns each: the element $n$ lies in row $\lfloor n /
L_k \rfloor$ and column $n \bmod L_k$. Since every element of $\CT_{(k)}$ is
$L_k$-periodic, it fixes each row and permutes the $L_k$ columns the same way in
every row; thus $\CT_{(k)} = S_{L_k}$ acts on the grid by these synchronised
column permutations. The inductive hypothesis implies that $\CT_{(k+1)}$
contains every permutation that acts the same on each row; we call these
permutations \emph{synchronised permutations}.

Our tool is Jordan's theorem on primitive permutations: a primitive permutation
group on $n \ge q + 3$ elements that contains a $q$-cycle (with $q$ prime)
contains the alternating group $A_n$. We apply it with the prime $q = 3$, so it
suffices to show that $\CT_{(k+1)}$ acts primitively and contains a $3$-cycle.
We establish these two facts in the next two lemmas.

\begin{lemma}
The group $\CT_{(k+1)}$ acts primitively on $\{0, \dots, L_{k+1} - 1\}$.
\end{lemma}
\begin{proof}
To show primitivity, we prove that $\CT_{(k+1)}$ is transitive and preserves
only the trivial partitions. Since $\CT_{(k)} = S_{L_k}$ acts as the full
symmetric group on the $L_k$ columns of each row, it suffices to show that the
modulus-$(k+1)$ generators extend these properties to the whole grid. We first
consider transitivity. For $0 < r < p$, the elements $rL_k - 1$ and $rL_k$ are
the last element of row $r - 1$ and the first element of row $r$ respectively.
We claim that they lie in the same $(k+1)$-block, that is $\lfloor (rL_k - 1) /
(k+1) \rfloor = \lfloor rL_k / (k+1) \rfloor$. Indeed, if this were not true,
then $k+1 \mid rL_k$, which is impossible as $k+1$ is a power of $p$ that does
not divide $L_k$, while $0 < r < p$. As the two elements lie in the same
$(k+1)$-block, there exists an element of $\CT_{k+1}$ that swaps $rL_k - 1$ and
$rL_k$, showing that $\CT_{(k+1)}$ acts transitively.

Let $\sim$ be an equivalence relation invariant under $\CT_{(k+1)}$, and let $B$
be a nontrivial equivalence class. By transitivity we may assume $0 \in B$ and
then suppose $0 \sim a$ with $a \neq 0$. We show that this block must contain
the entire grid. First, suppose that $a$ is not in the first column. Let $2 \le
c < L_k$ be a column other than the first two and different from the column that
contains $a$. Let $\sigma_1$ be the synchronised permutation swapping columns
$c$ and $a \bmod L_k$, and let $\sigma_2$ be the synchronised permutation
swapping columns $0$ and $1$. Now $\sigma_1$ fixes $0$ and sends $a$ to an
element $b$ in the same row as $a$ but in column $c$, which implies that $0 \sim
b$ and $b \in B$. Then $\sigma_2 \sigma_1$ sends $0$ to $1$ and $a$ to $b$, so
$1 \sim b$ and $1 \in B$ as well. Alternatively, if $a$ is in column $0$ we have
$L_k$ divides $a$. Then $k+1$ cannot divide $a$, since this would force $a = 0$.
Let $0 < c < k+1$ be such that $c \neq a \bmod k+1$. Then $\sigma_{0,c}^{(k+1)}$
sends $0$ to $c$ and fixes $a$, since it only acts on elements congruent to $0$
or $c$ modulo $k+1$. Hence $c \sim a$ and so $c \in B$ is in row $0$. In either
case, $0$ belongs to the same block as another element of the first row, so by
the primitivity of $\CT_{(k)}$ we conclude that $B$ contains the entire first
row.

To extend this to the entire grid, we first show that if the last element of a
row is in $B$, then the first element of the next row is also in $B$.
Specifically, let $0 < r < p$ be a nonzero row and we want to show $rL_k - 1
\sim rL_k$. Set $r_1 := (rL_k - 1) \bmod (k+1)$ and $r_2 := rL_k \bmod (k+1)$ -
we consider the action of $\sigma_{r_1, r_2}^{(k+1)}$. Since $r_2 \equiv r_1 + 1
\pmod{k+1}$, the residues $r_1$ and $r_2$ are distinct and $\sigma_{r_1,
r_2}^{(k+1)}$ swaps $rL_k - 1$ with $rL_k$. Now $0$ is not necessarily fixed by
$\sigma_{r_1, r_2}^{(k+1)}$, but because $r_1, r_2 < k+1 \le L_k$, $\sigma_{r_1,
r_2}^{(k+1)}(0)$ remains in row $0$ and therefore belongs to $B$. Consequently,
$0 \sim \sigma_{r_1, r_2}^{(k+1)}(0)$ and $rL_k - 1 \in B$ implies $rL_k \in B$.
Next, the synchronised permutation swapping column $0$ with column $j$ sends $0$
to $j$ and $rL_k$ to $rL_k + j$, which means that if $rL_k \in B$ then the
entire $r$-th row belongs to $B$. Working inductively, the entire space belongs
to block $B$, and therefore $\CT_{(k+1)}$ acts primitively on $\{0, \dots,
L_{k+1} - 1\}$.
\end{proof}

\begin{lemma}
The group $\CT_{(k+1)}$ contains a $3$-cycle.
\end{lemma}
\begin{proof}
Let $\sigma := \sigma_{0,1}^{(k+1)}$, let $\tau$ be the synchronised permutation
swapping column $0$ with column $2$, and set $g := [\sigma, \tau]$. A short
computation gives $g(0) = 1$, $g(1) = 2$, and $g(2) = 0$, so $g$ contains the
$3$-cycle $(0\;1\;2)$. We claim that if $m > 1$ (that is, if $k+1$ is composite)
then $g$ fixes every other point, since outside of $\{0, 1, 2\}$, the
transpositions of $\sigma$ and $\tau$ are disjoint. Suppose for contradiction
that some $z > 2$ is moved by both $\sigma$ and $\tau$. Then $z \bmod (k+1) \in
\{0, 1\}$ and $z \bmod L_k \in \{0, 2\}$, so $z \equiv a \pmod{k+1}$ and $z
\equiv b \pmod{L_k}$ for some $a \in \{0, 1\}$ and $b \in \{0, 2\}$. Such a $z$
exists if and only if $\gcd(k+1, L_k) = p^{m-1}$ divides $a - b$. If $a = b$,
then $a = b = 0$, so $z \equiv 0$ modulo both $k+1$ and $L_k$, hence modulo
$L_{k+1}$; this forces $z = 0$, contrary to $z > 2$. Otherwise $1 \le \lvert a -
b \rvert \le 2$, and $p^{m-1} \ge p \ge 2$ divides $a - b$ only when $p^{m-1} =
2$, i.e.\ $p = m = 2$; but then $k = p^m - 1 = 3$, contradicting $k \ge 4$.

Now suppose that $k+1 = p$ is prime. Let $\rho$ be the synchronised permutation
swapping column $5$ with column $L_k - 1$ and $h := \rho g \rho$. We claim that
$g' := g h^2$ is a $3$-cycle. First, $\sigma$ and $\tau$ are products of
disjoint transpositions, and because $\sigma$ moves the residues $0, 1 \bmod
(k+1)$ while $\tau$ moves the columns $0, 2$ (which differ by $2$), each
transposition of one shares a point with at most one transposition of the other.
Every orbit of $\sigma\tau$ therefore has at most three points, so $\sigma \tau$
is a product of disjoint $2$- and $3$-cycles. Hence $g^3 = ((\sigma \tau)^2)^3 =
(\sigma \tau)^6 = 1$, so $g$ is a product of $3$-cycles. Since $k+1$ is prime,
$k+1$ and $L_k$ are coprime, so by the Chinese remainder theorem let $0 \le A <
L_{k+1}$ be the value with $A \bmod k+1 = 3$ and $A \bmod L_k = 2$. Then let $B
:= A - 3$ and $C := A - L_k + 3$. Then $\tau(A) = A - 2$, $\sigma(A - 2) = B$,
and $\tau(B) = B$, so $g(A) = A - 2$. Similarly, $\tau(A - 2) = A$ and
$\sigma(A) = A$ so $g(A - 2) = B$, $\tau(B) = B$ so $g(B) = A$, and $\tau$ fixes
both $C$ and $\sigma(C)$ so $g(C) = C$. Finally, $\rho$ swaps $B$ and $C$, but
fixes $A$ and $A - 2$. Hence, $h$ acts as
\[
h(A) = A - 2, \qquad h(A - 2) = C, \qquad h(B) = B, \qquad h(C) = A
\]
and $g'$ acts as
\[
g'(A) = C, \qquad g'(B) = A, \qquad g'(C) = B.
\]
Therefore we have a $3$-cycle $(C\;B\;A)$, and it suffices to show $g'$ fixes
all other elements. Let $0 \le n < L_{k+1}$ with $n \neq A, B, C$. The key fact
is that the only element moved by both $\rho$ and $g$ is $B$. This follows from
the statement that if $\tau$ fixes both $m$ and $\sigma(m)$, then $g$ fixes $m$
as well. So if $\rho(m) \neq m$, then $m$ is in column $5$ or column $L_k-1$,
but $\tau$ only moves elements in columns $0$ and $2$, implying $\tau(m) = m$.
Since $\sigma(m)$ is one of $m-1, m, m+1$, the only way for $\sigma(m)$ to be
moved by $\tau$ is if $m \bmod L_k = L_k-1$ and $\sigma(m) = m + 1$. But
$\sigma(m) = m + 1$ means $m \bmod (k+1) = 0$. By the Chinese remainder theorem,
the equations $m \bmod L_k = L_k-1$ and $m \bmod (k+1) = 0$ uniquely describe a
value $0 \le m < L_{k+1}$, which is $B$.

First, suppose that $g(n)=n$. Since $g' = g\rho g^2\rho$, it suffices to show
that $g$ fixes $\rho(n)$. If $\rho(n)=n$, this is clear. Otherwise,
$\rho(\rho(n))=n\neq \rho(n)$, so $\rho$ moves $\rho(n)$. Moreover, $\rho(n)\neq
B$, since otherwise $n=\rho(B)=C$, contrary to our choice of $n$. By the key
fact, $g$ fixes $\rho(n)$. Now consider the case where $g(n) \neq n$. Since $n
\neq B$, the statement implies $\rho(n) = n$. Similarly, $g$ does not fix
$g^2(n)$ or else $g(n) = g^4(n) = g^2(n)$ which contradicts $g(n) \neq n$. We
also have that $g^2(n) \neq B$ or else $n = g(B) = A$, so we can conclude $\rho$
fixes $g^2(n)$. Therefore,
\[
g'(n) = g\rho g^2\rho(n) = g\rho g^2(n) = g^3(n) = n
\]
as desired, and in either case $\CT_{(k+1)}$ contains a $3$-cycle.
\end{proof}

We can now complete the inductive step. By the two lemmas, $\CT_{(k+1)}$ is
primitive and contains a $3$-cycle, so Jordan's theorem (with $q = 3$ and $n =
L_{k+1} \ge 6$) gives $A_{L_{k+1}} \le \CT_{(k+1)}$. To upgrade $A_{L_{k+1}}$ to
$S_{L_{k+1}}$, we exhibit an odd permutation. Let $d$ be the largest power of
$2$ dividing $L_{k+1}$; then $2 \le d \le k+1$, the quotient $L_{k+1} / d$ is
odd, and $\sigma_{0,1}^{(d)} \in \CT_{(k+1)}$ is a product of $L_{k+1} / d$
transpositions, of sign $(-1)^{L_{k+1}/d} = -1$. Hence $\CT_{(k+1)} =
S_{L_{k+1}}$, completing the induction.
\end{proof}

\section{Problem 21.24: Cograph power graphs are chordal}
\label{sec:2124}

There are many graphs naturally associated with a group, and these graphs often
illuminate group properties that are not immediately obvious from the algebraic
structure alone. For a finite group $G$, one such graph is the \emph{power
graph} $\mathcal{P}(G)$, defined as the undirected graph with vertex set $G$ in
which distinct vertices $x$ and $y$ are adjacent if either $x \in \langle
y\rangle$ or $y \in \langle x\rangle$. A similar notion is the so-called
\emph{enhanced power graph}, in which distinct vertices $x$ and $y$ are adjacent
if $\langle x,y\rangle$ is cyclic. With these definitions we note that every
power graph is a spanning subgraph of the corresponding enhanced power graph,
although the two graphs need not coincide.

Two properties that graphs can have are being a \emph{cograph}, meaning having
no induced path on four vertices ($P_4$-free), and being \emph{chordal}, meaning
having no induced cycle of length at least four ($C_n$-free for all $n \ge 4$).
Certainly, if a graph is a cograph, then it must be $C_n$-free for all $n \ge
5$. For arbitrary graphs, however, we cannot weaken the condition $n \ge 5$ to
$n \ge 4$. Indeed, neither property implies the other, as $P_4$ is chordal but
not a cograph, while $C_4$ is a cograph but not chordal.

On the other hand, for enhanced power graphs these properties are related.
Bubboloni, Fumagalli, and Praeger~\cite{BubboloniFumagalliPraeger} showed that
if the enhanced power graph of a finite group is a cograph, then it is chordal
as well. They then proposed Problem~21.24 of the Kourovka
Notebook~\cite{Kourovka}, conjecturing that the same statement holds for the
power graph.

In their paper they further made the related conjecture that for every finite
group $G$, if the power graph is a cograph, then the enhanced power graph is a
cograph too. They note that this is true for all finite simple groups, while the
power graph and enhanced power graph coincide in groups where every element has
prime power order.

As for the first conjecture, Cameron, Manna and
Mehatari~\cite{CameronMannaMehatari} classified the finite simple groups with
cograph power graph, while Brachter and Kaja~\cite{BrachterKaja} classified the
finite simple groups with chordal power graph. Combining these two
classifications implies that the first conjecture holds for all finite simple
groups. Here we show that it holds for all finite groups~\cite{KNote2124}, a
result which was independently obtained by Rundstr\"om~\cite{Rundstrom}.

\begin{theorem}
Let $G$ be a finite group. If $\mathcal{P}(G)$ is a cograph, then it is chordal.
\end{theorem}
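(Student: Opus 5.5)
The plan is to show the contrapositive. If $\mathcal{P}(G)$ is not chordal, it has an induced cycle $C_n$ with $n\ge 4$. For $n\ge5$, any four consecutive vertices of $C_n$ already form an induced $P_4$. So it suffices to show that an induced $4$-cycle $a-b-c-d-a$ in $\mathcal{P}(G)$ forces an induced $P_4$.

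\emph{Orienting the cycle.} Orient each edge $x-y$ as $x\to y$ when $x\in\langle y\rangle$. A directed path $x\to y\to z$ gives $x\in\langle z\rangle$, so $x$ and $z$ are adjacent. In the induced $4$-cycle the only non-adjacent pairs are $\{a,c\}$ and $\{b,d\}$, so no vertex has one in-edge and one out-edge. Hence each vertex is a source or a sink, and these alternate around the cycle. Relabelling if necessary, we get $a,c\in\langle b\rangle\cap\langle d\rangle$, with $a\notin\langle c\rangle$, $c\notin\langle a\rangle$, $b\notin\langle d\rangle$ and $d\notin\langle b\rangle$.

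\emph{Reducing the bottom vertices to prime-power order.} For a prime $p$, write $x_p$ for the $p$-part of $x$.
\begin{enumerate}
\item Since $\langle a\rangle\not\le\langle c\rangle$, some prime $p$ has $a_p\notin\langle c\rangle$.
\item Also $c\notin\langle a_p\rangle$, since otherwise $c\in\langle a\rangle$.
\item Moreover $a_p\in\langle b\rangle\cap\langle d\rangle$, and $a_p\neq b,d$ because $b,d\notin\langle a\rangle$.
\item So $a_p,b,c,d$ is again an induced $4$-cycle.
\item In the same way, pick a prime $q$ with $c_q\notin\langle a_p\rangle$ and replace $c$ by $c_q$. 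Then $a_p\notin\langle c_q\rangle\le\langle c\rangle$, so the cycle stays induced.
\item If $p=q$, then $\langle a_p\rangle$ and $\langle c_q\rangle$ would be $p$-subgroups of the cyclic group $\langle b\rangle$. Such subgroups form a chain, which contradicts non-adjacency.
\end{enumerate}
So we may assume $a$ is a $p$-element and $c$ is a $q$-element with $p\neq q$.

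\emph{Producing the $P_4$.} Since $\langle b\rangle\not\le\langle d\rangle$, choose a prime $r$ with $b_r\notin\langle d\rangle$. Since $p\neq q$, either $r\neq p$ or $r\neq q$; by the symmetry $a\leftrightarrow c$, assume $r\neq p$. The claim is that $b_r - b - a - d$ is an induced $P_4$. The edges $b_r-b$, $b-a$ and $a-d$ are clear, and $b_r\neq b$ because $b\notin\langle d\rangle$ while $a\in\langle d\rangle$. The non-edges are checked as follows.
\begin{enumerate}
\item $b$ and $d$ are non-adjacent by assumption.
\item $b_r$ and $d$ are non-adjacent. We have $b_r\notin\langle d\rangle$ by choice of $r$, and $d\in\langle b_r\rangle\le\langle b\rangle$ is impossible.
\item $b_r$ and $a$ are non-adjacent. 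First, $b_r\in\langle a\rangle$ would give $b_r\in\langle d\rangle$. Second, $a\in\langle b_r\rangle$ would make the nontrivial $p$-element $a$ an $r$-element with $r\neq p$; note $a\neq1$ since $a\notin\langle c\rangle$.
\end{enumerate}

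The main obstacle I expect is the reduction step: checking carefully that passing to the prime-power parts $a_p$ and $c_q$ preserves every adjacency and non-adjacency of the induced $4$-cycle, including that the four vertices stay distinct. Once the bottom vertices have prime-power orders for distinct primes, choosing the top prime $r$ and verifying the $P_4$ is routine.
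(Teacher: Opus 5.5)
Your proof is correct and is essentially the paper's argument. It uses the same source/sink dichotomy giving $a,c\in\langle b\rangle\cap\langle d\rangle$ and the same choice of a prime $r$ at which $\langle b\rangle\not\le\langle d\rangle$; the paper's $b'$ of order $r^{v_r(\ord(b))}$ generates the same subgroup as your $b_r$. It then takes the same induced path $b_r$--$b$--$a$--$d$, and it skips your reduction of $a,c$ to prime-power elements, because distinct primes $p\mid\ord(a)$ and $q\mid\ord(c)$ already suffice for the final check.

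One small slip: the reason you give for $b_r\neq b$, namely ``$b\notin\langle d\rangle$ while $a\in\langle d\rangle$'', actually proves $b_r\neq a$. The inequality $b_r\neq b$ instead follows from your third non-adjacency check, since $a\in\langle b\rangle$ but $a\notin\langle b_r\rangle$.
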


\begin{proof}
As we alluded to before, any $C_n$ with $n \ge 5$ contains a $P_4$ by removing
all but four adjacent vertices. So it suffices to show that an induced $C_4$ in
the power graph will also imply the existence of an induced $P_4$.

Let $a$--$b$--$c$--$d$--$a$ be an induced $C_4$ in $\mathcal{P}(G)$, so that $a
\nsim c$ and $b \nsim d$. Consider the path $a$--$b$--$c$. Since $a \sim b$,
either $a \in \langle b\rangle$ or $b \in \langle a\rangle$ and because $b \sim
c$, either $b \in \langle c\rangle$ or $c \in \langle b\rangle$. If $a \in
\langle b\rangle$ and $b \in \langle c\rangle$, then there exist $i, j \in
\N$ such that $a = b^i = (c^j)^i = c^{i j}$, so $a \in \langle
c\rangle$, contradicting $a \nsim c$. Similarly, $b \in \langle a\rangle$ and $c
\in \langle b\rangle$ would give $c \in \langle a\rangle$, again a
contradiction. Therefore, either $a$ and $c$ both lie in $\langle b\rangle$, or
$b$ lies in both $\langle a\rangle$ and $\langle c\rangle$. The same argument
applied to the path $c$--$d$--$a$ yields that either $a$ and $c$ both lie in
$\langle d\rangle$ or $d$ lies in both $\langle a\rangle$ and $\langle
c\rangle$.

Combining these two dichotomies gives four cases. First, if $a, c \in \langle
b\rangle$ and $d \in \langle a\rangle \cap \langle c\rangle$, then $d \in
\langle a\rangle \le \langle b\rangle$, contradicting $b \nsim d$. Similarly, if
$a, c \in \langle d\rangle$ and $b \in \langle a\rangle \cap \langle c\rangle$,
then $b \in \langle c\rangle \le \langle d\rangle$, again contradicting $b \nsim
d$. We conclude that we either have $a, c \in \langle b\rangle \cap \langle
d\rangle$, or $b, d \in \langle a\rangle \cap \langle c\rangle$. The second case
works analogously to the first by splitting the cycle at $b$ rather than $a$, so
for the rest of the proof we assume without loss of generality that the first
case holds.

In this case, $a$ and $c$ both belong to the cyclic group $\langle b\rangle$,
while $a \notin \langle c\rangle$ and $c \notin \langle a\rangle$. In a cyclic
group, there is exactly one subgroup of each order dividing the group order. If
$\ord(a) \mid \ord(c)$, the unique subgroup of order $\ord(a)$ would sit inside
the unique subgroup of order $\ord(c)$, forcing $a \in \langle c\rangle$. Hence
$\ord(a) \nmid \ord(c)$, and by the same reasoning $\ord(c) \nmid \ord(a)$.
Therefore, there exist distinct primes $p$ and $q$ such that $p \mid \ord(a)$
and $q \mid \ord(c)$.

Since $b \notin \langle d\rangle$, we have $H := \langle b\rangle \cap \langle
d\rangle \subsetneq \langle b\rangle$ and $\lvert H\rvert < \ord(b)$. So there
also exists a prime $r$ with $v_r(\ord(b)) > v_r(\lvert H\rvert)$. Now fix $m_0
:= r^{v_r(\ord(b))}$, so that $m_0$ divides $\ord(b)$ but does not divide
$\lvert H\rvert$. If $r \neq p$ then $p \nmid m_0$ and $\ord(a) \nmid m_0$.
Similarly, if $r \neq q$ then $\ord(c) \nmid m_0$. Since $p$ and $q$ are
distinct, at least one of them is not equal to $r$. We can assume without loss
of generality that $r \neq p$; the other case is identical with $c$ in place of
$a$.

Since $\langle b\rangle$ is cyclic and $m_0 \mid \ord(b)$, there is an element
$b' \in \langle b\rangle$ of order $m_0$. We then claim that $b'$--$b$--$a$--$d$
is an induced $P_4$. We know $b \sim a$, $a \sim d$, and $b' \in \langle
b\rangle$, so $b' \sim b$. Therefore, $b'$--$b$--$a$--$d$ is a path. To prove
that it is also an induced path, we need to show these are the only relations,
that is $b \nsim d$, $b' \nsim d$ and $a \nsim b'$. As we already know $b \nsim
d$, we need to show $b' \notin \langle d\rangle$, $d \notin \langle b'\rangle$,
$a\notin\langle b'\rangle$ and $b'\notin\langle a\rangle$. Let us prove these in
order.

The first non-containment follows, as otherwise $b' \in H$ but $\ord(b') = m_0
\nmid \lvert H\rvert$. Secondly, $d \notin \langle b'\rangle$, since otherwise
$d \in \langle b\rangle$, contradicting $b \nsim d$. The third one follows from
the fact that $r\neq p \mid \ord(a)$, while $m_0$ is a power of $r$. For the
final one, we recall that $a\in H$, so $\ord(a)\mid \lvert H\rvert$, whereas
$m_0\nmid \lvert H\rvert$.
\end{proof}

\section{Problem 21.147: Right-relatively convex subgroups need not form a
sublattice}
\label{sec:21147}

We say that a group $G$ is \emph{right-orderable} if there exists a total order
$\le$ on $G$ such that $a \le b$ implies $ac \le bc$ for all $a, b, c \in G$.
Such an order is called a \emph{right-invariant total order}, or \emph{right
order} for short. A subgroup $H$ of a right-orderable group~$G$ is said to be
\emph{right-relatively convex} if it is convex with respect to some right order.
That is, $H$ is right-relatively convex if there exists a right order on $G$
such that, for all $a, b \in H$ and $c \in G$ with $a \le c \le b$, we have $c
\in H$.

A related concept is that of a \emph{lattice-ordered group}, or $\ell$-group for
short; an $\ell$-group is a group $G$ with a partial order $\le$ such that every
two elements $a,b \in G$ have both a greatest lower bound and a least upper
bound, and if $a \le b$ holds, then $cad \le cbd$ for all $c, d \in G$. Here
there is a similar notion of convexity; a subgroup $H$ of an $\ell$-group $G$ is
a \emph{convex $\ell$-subgroup} if it is closed under the two lattice operations
and if it is convex with respect to the order. For more information on
lattice-ordered groups, we refer the reader to the monograph by Kopytov and
Medvedev~\cite{KopytovMedvedev1994}.

For a given group $G$, we now consider the lattice of all subgroups of $G$. If
we take two elements $H, K$ of this lattice, we once again have two lattice
operations; we can take the intersection $H \cap K$ to get another subgroup, or
we can take the join $\langle H \cup K\rangle$, which is the smallest subgroup
that contains both $H$ and $K$.

With the above definitions in place, Conrad~\cite[Proposition 3.2]{Conrad1965}
proved that the convex $\ell$-subgroups of any $\ell$-group form a sublattice of
the subgroup lattice. Motivated by the analogy between lattice-ordered and
right-ordered groups (see e.g.~\cite{ColacitoMarra}), Bludov, Glass, Kopytov and
Medvedev~\cite[Problem 2.23]{BGKM} asked for which right-orderable groups the
analogous statement remains true when one replaces convex $\ell$-subgroups by
right-relatively convex subgroups. More concretely, Problem~21.147 of the
Kourovka Notebook~\cite{Kourovka}, attributed to V.~M.~Kopytov and
N.~Ya.~Medvedev and communicated by A.~V.~Zenkov, asks whether, for every
right-orderable group $G$, the set of right-relatively convex subgroups of $G$
forms a sublattice of its subgroup lattice.

We note that, if we fix a specific right order, then the corresponding convex
subgroups form a chain; for every two subgroups $H$ and $K$ which are both
convex according to the same order, we either have $H \subseteq K$ or $K
\subseteq H$. In particular, these subgroups do indeed form a sublattice. As it
turns out, however, this is no longer true for the set of all right-relatively
convex subgroups~\cite{KNote21147}.

\begin{theorem}
There exists a right-orderable group whose right-relatively convex subgroups
do not form a sublattice of its subgroup lattice.
\end{theorem}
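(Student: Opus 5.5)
We will take the simplest possible right-orderable group, $G := \Z^2$. Since $G$ is abelian, right orders coincide with ordinary (bi-invariant) total orders. The plan is to exhibit two right-relatively convex subgroups $H_1, H_2$ whose join is not right-relatively convex. Take
\[
H_1 := \langle (1,0)\rangle, \qquad H_2 := \langle (1,2)\rangle .
\]
Their join is $\langle H_1\cup H_2\rangle = \{(a,b)\in\Z^2 : b \text{ even}\}$, a proper subgroup of index $2$.

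First we show that $H_1$ and $H_2$ are each convex for some order. Both $(1,0),(0,1)$ and $(1,2),(0,1)$ are $\Z$-bases of $\Z^2$, since the change-of-basis matrices have determinant $1$. Given a basis $v,w$, order $\Z^2$ lexicographically by writing $x = \alpha v + \beta w$: compare first $\beta$, then $\alpha$. This is a translation-invariant total order. In it, $\langle v\rangle = \{\beta = 0\}$ is convex. Indeed, if $\alpha_1 v \le x \le \alpha_2 v$, then the $w$-coefficient of $x$ must be $0$, since any nonzero $w$-coefficient would place $x$ strictly above or below all of $\langle v\rangle$. Applying this with $v=(1,0)$ and $v=(1,2)$ shows that $H_1$ and $H_2$ are right-relatively convex.

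Next we prove a general obstruction: if $H$ is convex in a right-ordered group $G$ and $g^n\in H$ for some $n\ge 1$, then $g \in H$. Suppose $g\notin H$; replacing $g$ by $g^{-1}$ if necessary, we may assume $g>1$. Then $g > h$ for every $h\in H$. Otherwise there is $h \in H$ with $1\le g\le h$, and since $1 \in H$, convexity gives $g\in H$. Right-invariance now gives $g^2 > g$, and inductively $g^n \ge g$. Hence $g^n > h$ for all $h\in H$, so $g^n \notin H$.

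Finally we apply this to the join $J := \langle H_1\cup H_2\rangle$. With $g=(0,1)$ we have $g^2=(0,2)=(1,2)-(1,0)\in J$ but $g\notin J$, so by the obstruction $J$ is convex in no right order on $\Z^2$. Therefore the right-relatively convex subgroups of $\Z^2$ are not closed under joins, and they do not form a sublattice of the subgroup lattice. The only step needing any care is the obstruction, specifically the sign reduction and the inequality $g^n\ge g$. Everything else is a direct verification.
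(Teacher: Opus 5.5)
Your proof is correct and follows the paper's argument. Lexicographic orders adapted to a suitable basis make two cyclic subgroups convex, while their join contains $2g$ but not $g$. No right order can make such a subgroup convex; the paper uses only the $n=2$ case of your isolation lemma.

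The only difference is that you work directly in $\Z^2$. The paper places the same configuration inside the abelian subgroup $\{a_y=0\}$ of the integer Heisenberg group, only to get a non-abelian counterexample. The proposers wanted that, but the statement as written does not require it, and the paper itself remarks that the abelian subgroup suffices.
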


\begin{proof}
We give a counterexample in the integer Heisenberg group
\[
G := \{a = (a_x, a_y, a_z): a_x, a_y, a_z \in \Z\}
\]
with group operation and inverses defined by
\[
a+b := (a_x + b_x,\;a_y + b_y,\;a_z + b_z + a_xb_y)
\qquad \text{and} \qquad
-a:=(-a_x,-a_y,-a_z+a_xa_y).
\]
Even though $G$ is non-abelian (indeed, one can check that e.g.\ $(1,0,0)$ and
$(0,1,0)$ do not commute), we will use additive notation throughout. The reason
for this is that most of the computations below take place in the abelian
subgroup of elements with second coordinate equal to $0$, in which $a+b$ always
has vanishing cross-term $a_xb_y$.

With the subgroups $H_1, H_2$ defined as $\langle(1,0,1)\rangle$ and
$\langle(1,0,-1)\rangle$ respectively, it is sufficient to show that these
subgroups are right-relatively convex, even though their join $J := \langle H_1,
H_2 \rangle$ in the subgroup lattice of $G$ is not.

As for the latter claim, with $e := (1, 0, 0)$ we have
\[
2e=(2,0,0)=(1,0,1)+(1,0,-1)\in J.
\]
On the other hand, any element $(r, 0, s) \in J$ has $r \equiv s \pmod{2}$,
implying $e \notin J$. Hence, since we either have $2e \le e \le 0$ or $0 \le e
\le 2e$ for every right order $\le$ on $G$, $J$ is not right-relatively convex.
It remains to show that $H_1$ and $H_2$ are right-relatively convex, and let us
start with $H_1$.

Define $P$ as the subset of all $a \in G$ such that either $a_y > 0$, or $a_y =
0$ and $a_x > a_z$, or $a_y = 0$ and $a_x = a_z > 0$. That is, $a \in P$
precisely when $(a_y, a_x - a_z, a_z)$ is larger than $(0, 0, 0)$ in the
lexicographic order. We then define the order $\le$ on $G$ as follows: $a \le b$
if either $a = b$ or $b - a \in P$. First, from the fact that $P$ is closed
under the group operation, it follows that the relation $\le$ is transitive.
Secondly, this gives a total order, because for every nonzero $a \in G$ exactly
one of $a$ and $-a$ lies in $P$. Finally, it is right-invariant, because $(b+c)
- (a+c) = b-a$. We aim to show that $H_1$ is convex with respect to this order.

So assume $a \le c \le b$ with $a, b \in H_1$. As $a$ and $b$ are elements of
$H_1$, we see that $a_x = a_z$, $b_x = b_z$ and $a_y = b_y = 0$. By combining
the latter equalities with $a \le c \le b$, it follows that $c_y = 0$ as well.
We then find that $c - a = (c_x - a_x, 0, c_z - a_x)$ and $b - c = (b_x - c_x,
0, b_x - c_z)$, which imply $c_x \ge c_z$ and $c_x \le c_z$ respectively. We
thereby conclude $c_x = c_z$ so that $c \in H_1$, as desired.

For $H_2$, the same argument works if we instead define $P$ as the subset of all
$a \in G$ such that $(a_y, a_x + a_z, a_z)$ is larger than $(0, 0, 0)$ in the
lexicographic order. This finishes the proof.
\end{proof}

We remark that this specific construction works more generally by taking the
semidirect product $G$ of a torsion-free abelian group $A$ with a
right-orderable group $H$. As long as $A$ has pure subgroups whose sum is not
pure (where a subgroup $B \le A$ is called \emph{pure} if $na \in B$ for some $a
\in A, n \in \N$ implies $a \in B$), the same obstruction applies.

Finally, one can note that the ambient group $G$ is essentially superfluous
here, and the same argument works just as well in $A$. The group $G$ is used
simply to show that counterexamples exist both in the abelian and non-abelian
case, and in the appendix we elaborate on this further.

\section{Problem 21.150: Counterexample to a rank inequality for
\texorpdfstring{$p$}{p}-group extensions}
\label{sec:21150}

For a prime $p$ we recall that an elementary abelian $p$-group is an abelian
group in which all non-identity elements have the same order $p$. In particular,
if such a group is finite and nontrivial, it is isomorphic to $(\Z/p\Z)^r$ for
some positive integer $r$, which we call its rank. Here we consider finite
groups $G$ which contain a normal subgroup $A$ and a subgroup $B$ such that both
$A$ and $B$ are elementary abelian $p$-subgroups of $G$, and for which the
natural map $B\to G/A$ is an isomorphism. Assume now that $A$ contains an
element $a$ that does not commute with any non-identity element of $B$.
Equivalently, assume that the centraliser $C_B(a)$ of $a$ in $B$ is trivial. We
now define $H := \langle a,B\rangle$, and let $Z(H)$ and $H'$ be the centre and
commutator subgroup of $H$ respectively. Problem~21.150 of the Kourovka
Notebook~\cite{Kourovka}, proposed by V.~I.~Zenkov, then asks whether one always
has $\rank\bigl(Z(H)\cap H'\bigr)\le\rank(B)$. As it turns
out~\cite{KNote21150}, the answer is no.

\begin{theorem}
There exists a $p$-group $G$, which is the extension of an elementary abelian
normal subgroup $A$ by an elementary abelian subgroup $B \cong G/A$ such that,
with $C_B(a) = 1$ for some $a \in A$ and $H := \langle a,B\rangle$, we have
\[
\rank\bigl(Z(H)\cap H'\bigr) > \rank(B).
\]
\end{theorem}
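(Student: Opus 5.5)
We will construct $G$ as a split extension $G = A \rtimes B$ in which $A$ is a cyclic $\F_p[B]$-module generated by $a$. Then $H = G$, and everything reduces to linear algebra over the group algebra. The structural observation driving the choice is this. Let $A_0 := \F_p[B]\,a$ be the submodule generated by $a$. Since $B$ and $A_0$ are abelian, we have $H = A_0 \rtimes B$ and $H' = [A_0,B] = I\cdot A_0$, where $I$ is the augmentation ideal. Moreover, $Z(H)\cap H'$ contains every $B$-fixed vector of $I A_0$. So we need a cyclic module whose socle inside $I A_0$ has rank larger than $\rank(B)$. A quick check shows that $\rank(B)=1$ and the regular module for $C_2^2$ both fail (the socle has rank $\le \rank B$), so we go to $p=3$ and $B = C_3^2$.

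\textbf{Construction.} Let $p=3$ and $B=\langle b_1,b_2\rangle\cong(\Z/3\Z)^2$. Then $\F_3[B]\cong \F_3[x,y]/(x^3,y^3)$ with $x=b_1-1$ and $y=b_2-1$. Take
\[
A := \F_3[x,y]/(x,y)^3 ,
\]
which has basis $1,x,y,x^2,xy,y^2$, so $A$ is elementary abelian of rank $6$. Set $G := A\rtimes B$ (of order $3^8$) with the module action, and take $a := 1$. Then $A$ is normal, $B$ is a complement, and $B\cong G/A$.

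\textbf{Checks, in order.} First, we show $C_B(a)=1$. For $1\ne b=b_1^{i}b_2^{j}$, the element $b-1 = (1+x)^i(1+y)^j-1$ is congruent to $ix+jy$ modulo $(x,y)^2$. This is nonzero in $A$, so $b\cdot a\ne a$.

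Second, we identify $H$ and $H'$. Since $a$ generates $A$ as a module, $H=\langle a,B\rangle\supseteq A$, and hence $H=G$. Because $A$ and $B$ are abelian, $H'$ is generated modulo normality by the commutators $[v,b]=(b-1)v$. Therefore $H'=(x,y)A$, which is spanned by $x,y,x^2,xy,y^2$.

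Third, we bound $Z(H)$ from below. The degree-$2$ part $D=\langle x^2,xy,y^2\rangle$ is killed by $x$ and $y$, so it is fixed by $B$. Since $A$ is abelian, $D$ centralises both $A$ and $B$, hence $D\le Z(H)$. It follows that $D\le Z(H)\cap H'$, giving
\[
\rank\bigl(Z(H)\cap H'\bigr)\ge 3>2=\rank(B).
\]
In fact $Z(H)=D$ exactly, since $B$ acts faithfully, but only the lower bound is needed.

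\textbf{Main obstacle.} The only real difficulty is finding the module: the socle of a cyclic module must exceed the rank of $B$ while the action on $a$ remains fixed-point-free. Truncating the group algebra at degree $3$ achieves this. Once that choice is made, the verifications are routine, and a formal check could just compute in the $3^8$-element group.
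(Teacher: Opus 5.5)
Your proposal is correct and is essentially the paper's proof. It uses the same module $A=\F_3[x,y]/(x,y)^3$ with $B$ acting by multiplication by $1+x$ and $1+y$, the same choice $a=1$, and the same witness subspace $\langle x^2,xy,y^2\rangle$, which is central because $x$ and $y$ annihilate it. The only difference is cosmetic: you identify $H=G$ and compute $H'=(x,y)A$ outright, whereas the paper simply exhibits $x^2$, $xy$, $y^2$ as iterated commutators $[b_i,[b_j,a]]$.
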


\begin{proof}
With $p = 3$ we let $A\cong(\Z/p\Z)^6$. Denoting $\mathfrak{m}:=(x,y)$, we then
identify $A$ with the additive group of the $6$-dimensional $\F_3$-algebra
$\F_3[x,y]/\mathfrak{m}^3$ on the basis $\{1,x,y,x^2,xy,y^2\}$. We further
define the functions $b_1$ and $b_2$ by
\[
b_1(u) := u(1+x) \qquad \text{and} \qquad b_2(u) := u(1+y),
\]
and define $B := \langle b_1,b_2\rangle \cong (\Z/p\Z)^2$, which has rank $2$.
Then $B$ acts on $A$ by multiplication, and we can define $G$ as the semidirect
product $A\rtimes B$. Letting $a$ be the constant element $1$ in $A$, we claim
that $C_B(a)$ is trivial. Indeed, we note that $b \in C_B(a)$ if and only if
$b(a) = a$. And given $b = b_1^i b_2^j \in B$, we have $b(a) = a$ precisely when
$(1 + x)^i (1 + y)^j = 1$, which implies $i = j = 0$. With $H := \langle
a,B\rangle$ it now suffices to show that $Z(H)\cap H'$ has rank at least $3$. In
fact, since $x^2,xy,y^2$ are $\F_3$-linearly independent in $A$, it is
sufficient to show that both $\{x^2,xy,y^2\} \subseteq Z(H)$ and $\{x^2,xy,y^2\}
\subseteq H'$.

The first containment is not hard to see, as multiplication by $1+x$ or $1+y$
acts trivially modulo $\mathfrak{m}^3$ on the degree-$2$ elements $x^2$, $xy$
and $y^2$. As for the second containment, computing commutators in $H$ gives
$x=[b_1,a]$ and $y=[b_2,a]$. Hence, by iterating we see that
\[
x^2=[b_1,[b_1,a]],\qquad
xy=[b_1,[b_2,a]],\qquad \text{and}\qquad
y^2=[b_2,[b_2,a]]
\]
all lie in $H'$, as desired.
\end{proof}

\section*{Acknowledgements}
We thank Khukhro and Mazurov for their work as editors of the Kourovka Notebook
and for sharing our submissions with the original authors of the problems and
other reviewers. We are further grateful to them for taking the time to examine
our solutions. We also thank Stefano Rocca for his valuable contributions to
this work.

\appendix

\section{Formal discovery and verification with Aristotle}
\label{sec:appendix}

As explained in the introduction, Aristotle, the formal reasoning agent
developed by Harmonic~\cite{Aristotle}, autonomously discovered formal solutions
to all eight problems. In this appendix, we describe the process in detail as a
case study of automated proof discovery in collaboration with human
mathematicians. We aim to give a transparent account of what was and was not
done by Aristotle, and to apply the same research standards to these results as
we would to any other mathematical work.

Before we could ask Aristotle to solve a problem from the Kourovka Notebook, we
had to decide \emph{which} problem we wanted it to tackle. Three considerations
guided the choice. The first was our mathematical judgement about which problems
seemed amenable to attack. When we began, we were not aware of any problem from
the Kourovka Notebook that had been resolved by an artificial intelligence, let
alone by one working in a formal language.\footnote{Our solutions to
Problems~21.150 and~20.125, submitted in March 2026, were the first of the
eight presented here, and the editors subsequently described them as the first
in the Notebook's history to have been obtained by an AI system:
\url{https://algebra-lincoln.org/2026/05/19/ai-solving-phd-level-problems-in-mathematics/}.}
It was therefore unclear whether we could expect to solve even one of them, and
we steered away from famous open problems in order to maximise the chance of
success. The second was our experience that one of Aristotle's notable strengths
is coming up with constructions, examples and counterexamples. This is reflected
in the choice of problems: more than half of the resulting solutions rest on
explicit group constructions. The third was \emph{Mathlib-adjacency}: how much
background theory Aristotle would have to develop before it could even state
the problem. A problem is Mathlib-adjacent when the notions it involves are
already formalised, so that its formal statement sits a short step from
existing definitions, whereas a problem far from Mathlib requires that
infrastructure to be built first, at a cost independent of the difficulty of
the mathematics.

On this last point, once we had chosen a problem for Aristotle to solve, the
initial step was to ask it to formalise the definitions needed to state the
problem in Lean. This was often necessary, either because a definition was
specific to the phrasing of the problem, as with permuted products in
Section~\ref{sec:1850}, or because the relevant notion, such as a Rota--Baxter
operator or power graph, was not yet present in Mathlib. We then asked Aristotle
to use these definitions to formulate the problem in Lean. In most cases,
Aristotle produced a correct formal statement that required little subsequent
human editing. When a statement was incorrect, this was generally because the
relevant notion admitted several possible definitions. Aristotle then either
chose a definition that was different from the intended one, or it combined
features of multiple definitions. However, even logically correct statements
sometimes required human revision. This is because mathematically equivalent
formulations can differ considerably in how convenient they are to use. For
example, one problem we tried Aristotle on involved the notion of the outer
automorphism group, a concept which is not yet available in Mathlib. However,
rather than set up the definitions needed to express the problem in terms of the
outer automorphism group, Aristotle expanded the statement directly into a
longer, lower-level formula, in this case asserting that every automorphism is
either a conjugation or a subgroup element composed with a conjugation. This is
logically equivalent to the intended statement, but is considerably less
convenient for developing and applying general theory.

After completing the initial set-up, we prompted Aristotle to search for a
solution or at least make partial progress. Some, like Problem~18.50 and
Problem~21.24, were resolved relatively quickly once the definitions and
statements were in place. For others, the progress was slower. A recurring
hurdle we had to overcome was Aristotle's propensity to give up on open
problems. For the author prompting Aristotle, this meant finding ways to ask for
partial progress. Some techniques included asking Aristotle to write a proof
outline first, turning challenging steps into intermediate lemmas, having
Aristotle focus on an alternative formulation of the problem that it could not
recognise as an open problem, or isolating key parts of the problem into
separate problems that could be solved independently. For example, in
Problem~21.8 Aristotle quickly found how to map class transpositions to finite
permutations and proved that the mapping was injective. This reduced the problem
to showing that the image of the class transpositions generates the entire
finite group. This isolated the core challenge of the problem and was less
recognisable to Aristotle as an open problem, which made it more willing to try
out various strategies. Over time, Aristotle was able to come up with the steps
needed for each solution and assemble the full proof.

Once a formal solution was found by Aristotle, we prepared a faithful
informalisation to be shared with Kourovka Notebook editors, problem proposers,
and reviewers. Since Aristotle works with formalised mathematics in Lean, the
first step is to convert the formal solution to an informal solution written in
natural language. As with the formal solution, the first pass was done by
Aristotle. Afterwards, we further refined the writing in order to make it read
more naturally and to include the right amount of detail a human reviewer would
expect.

After the conversion, we sought review from the problem authors whenever
possible, sometimes through the Kourovka Notebook editors. A few instances stand
out. For example, for Problem~21.24, the proposers informed us that Rundstr\"om
had independently solved the same problem and we agreed to submit them together.
Both are now credited in the Notebook.

For Problem~21.147, we recall that the example in Section~\ref{sec:21147} uses
the non-abelian integer Heisenberg group $G$. At first glance this construction
may seem overly complicated, as we only work in an abelian subgroup $A \le G$.
In fact, the original solution did indeed use $A$ instead. However, after we
sent it to the authors, they explained that this example was already known and
that they had meant to ask about a non-abelian example instead. Aristotle then
quickly realised that one could simply use a semidirect product construction to
embed the abelian group in a larger group and thereby obtain a non-abelian
counterexample as well. Although the distinction ultimately did not affect the
solution in this case, the episode illustrates a general phenomenon that often
arises when formalising mathematics: the discovery of hidden assumptions made by
mathematicians, often without realising it.

Another prominent example of this latter phenomenon concerns Problem~21.149 on
so-called Dlab groups~\cite{Dlab1968}. This problem originally asked: are there
order automorphisms of Dlab groups that are not inner automorphisms? After
Aristotle set up the necessary definitions, it managed to resolve this question
in the affirmative, and we subsequently shared the solution with the editors of
the Kourovka Notebook. Afterwards, the editors sent it to one of the problem
authors who reviewed it and agreed that it was correct. However, he then
explained that he actually meant to ask a stronger question. Indeed, what he
truly wanted to know was whether there exist order automorphisms of Dlab groups
that are not induced by conjugation by elements of a (possibly larger) Dlab
group. This more difficult problem is still open, and the problem statement in
the Kourovka Notebook has since been refined to reflect this intended meaning.
This episode showcases the role that formal mathematics can play in
clarification: it can not only certify arguments, but also sharpen the
statements themselves.

Once the solutions were reviewed and accepted by the editors of the Kourovka
Notebook (see \cite{KNote346, KNote1850, KNote1925, KNote20125, KNote218,
KNote2124, KNote21147, KNote21150} for the original submissions), we revised
both the informal and formal versions for publication. For the informal proofs,
this meant polishing the exposition and adding relevant mathematical context and
references. For the formal proofs, this included improving documentation,
organisation and style, in order to make the proofs easier to read. It further
included improving performance and adhering to usual code standards to increase
reusability. This is important, as these documents do not stand alone, but are
part of a broader picture. Indeed, the resulting proofs and constructions might
be applicable to other problems, while important definitions and theorems will
be added to Mathlib to extend the standard library for everyone to use. This, in
turn, expands the mathematical infrastructure available to Aristotle, creating a
positive feedback loop that benefits everyone.

More generally, as authors we believe in the potential of artificial
intelligence as a collaborative tool for mathematicians. We hope this work
demonstrates that responsible use in this domain means maintaining rigorous and
verifiable proof standards, transparently documenting the human-AI workflow,
actively communicating findings to foster new collaborations with other
researchers, and contributing foundational infrastructure to public, open source
libraries such as Mathlib.

\end{document}